\documentclass[twoside,reqno]{amsart}

\setlength{\parskip}{3mm}
\setlength{\parindent}{0mm}
\textheight=225mm \textwidth=135mm
\flushbottom

\usepackage{latexsym}







%

\newcommand{\qdn}{\hspace*{-1.5mm}}
\newcommand{\qqdn}{\hspace*{-2.5mm}}
\newcommand{\xqdn}{\hspace*{-5.0mm}}
\newcommand{\xxqdn}{\hspace*{-10mm}}





%

%

%

%






\newcommand{\ffnk}[4]{\left[\qdn\ba{#1}#3\\#4\ea{\!\Big|\:#2}\right]}


\newcommand{\binm}{\binom}




\newcommand{\nnm}{\nonumber}
\newcommand{\be}{\begin{equation}}
\newcommand{\ee}{\end{equation}}
\newcommand{\ba}{\begin{array}}
\newcommand{\ea}{\end{array}}
\newcommand{\bmn}{\begin{eqnarray}}
\newcommand{\emn}{\end{eqnarray}}
\newcommand{\bnm}{\begin{eqnarray*}}
\newcommand{\enm}{\end{eqnarray*}}
\newcommand{\bln}{\begin{subequations}}
\newcommand{\eln}{\end{subequations}}

\newtheorem{thm}{Theorem}
\newtheorem{lemm}[thm]{Lemma}
\newtheorem{corl}[thm]{Corollary}

\newtheorem{entry}{Entry}

\newcommand{\bbtm}[4]{\bibitem{kn:#1}{#2,}~{#3,}~{#4.}}
\newcommand{\cito}[1]{\cite{kn:#1}}
\newcommand{\citu}[2]{\cite[#2]{kn:#1}}

%


\begin{document} 
{
\title{Saalsch\"{u}tz's theorem and summation formulae involving generalized harmonic numbers}
\author{Chuanan Wei}

\footnote{\emph{2010 Mathematics Subject Classification}: Primary
05A10 and Secondary 33C20.}

\dedicatory{Department of Mathematics\\
  Shanghai Normal University, Shanghai 200234, China}

\thanks{\emph{Email address}: weichuanan78@ 163.com}

 \keywords{Hypergeometric series; Saalsch\"{u}tz's theorem; Derivative operator; Integral operator; Harmonic numbers}

\begin{abstract}
In terms of the derivative operator, integral operator and
Saalsch\"{u}tz's theorem, two families of summation formulae
involving generalized harmonic numbers are established.
\end{abstract}

\maketitle\thispagestyle{empty}
\markboth{C. Wei}
         {Summation formulae involving generalized harmonic numbers}

\section{Introduction}
For a complex variable $x$, define the shifted factorial to be
\[(x)_{0}=0\quad \text{and}\quad (x)_{n}
=x(x+1)\cdots(x+n-1)\quad \text{with}\quad n\in\mathbb{N}.\]
 Following Andrews, Askey and Roy~\citu{andrews-r}{Chapter 2}, define the hypergeometric series by
\[\qdn_{1+r}F_s\ffnk{cccc}{z}{a_0,&a_1,&\cdots,&a_r}{&b_1,&\cdots,&b_s}
 \:=\:\sum_{k=0}^\infty\frac{(a_0)_k(a_1)_k\cdots(a_r)_k}{(1)_k(b_1)_k\cdots(b_s)_k}z^k,\]
where $\{a_{i}\}_{i\geq0}$ and $\{b_{j}\}_{j\geq1}$ are complex
parameters such that no zero factors appear in the denominators of
the summand on the right hand side. Then Saalsch\"{u}tz's theorem
(cf. \citu{andrews-r}{p. 69}) can be stated as
 \bmn\label{saalschutz}
 _3F_2\ffnk{cccc}{1}{a,b,-n}{c,1+a+b-c-n}
=\frac{(c-a)_n(c-b)_n}{(c)_n(c-a-b)_n}.
 \emn

For a complex number $x$ and a positive integer $\ell$, define
generalized harmonic numbers of $\ell$-order to be
\[H_{0}^{\langle \ell\rangle}(x)=0
\quad\text{and}\quad
 H_{n}^{\langle\ell\rangle}(x)=\sum_{k=1}^n\frac{1}{(x+k)^{\ell}}
 \quad\text{with}\quad n\in\mathbb{N}.\]
When $x=0$, they become harmonic numbers of $\ell$-order
\[H_{0}^{\langle \ell\rangle}=0
\quad\text{and}\quad
  H_{n}^{\langle \ell\rangle}
  =\sum_{k=1}^n\frac{1}{k^{\ell}} \quad\text{with}\quad n\in\mathbb{N}.\]
  Fixing $\ell=1$ in $H_{0}^{\langle \ell\rangle}(x)$ and $H_{n}^{\langle \ell\rangle}(x)$, we obtain
generalized harmonic numbers
\[H_{0}(x)=0
\quad\text{and}\quad H_{n}(x)
  =\sum_{k=1}^n\frac{1}{x+k} \quad\text{with}\quad n\in\mathbb{N}.\]
When $x=0$, they reduce to classical harmonic numbers
\[H_{0}=0\quad\text{and}\quad
H_{n}=\sum_{k=1}^n\frac{1}{k} \quad\text{with}\quad
n\in\mathbb{N}.\]

 For a differentiable function $f(x)$, define the derivative operator
$\mathcal{D}_x$ by
 \bnm
\mathcal{D}_xf(x)=\frac{d}{dx}f(x).
 \enm
For an integrable function $g(x)$, define the integral operator
$\mathcal{I}_x$ by
 \bnm
\mathcal{I}_xg(x)=\int_0^{x}g(x)dx.
 \enm
In order to explain the relation of the derivative operator and
generalized harmonic numbers, we introduce the following lemma.

\begin{lemm} \label{lemm-a}
 Let $x$ and  $\{a_j,b_j,c_j,d_j\}_{j=1}^s$ be all complex
numbers. Then
 \bnm
\mathcal{D}_x\prod_{j=1}^s\frac{a_jx+b_j}{c_jx+d_j}=\prod_{j=1}^s\frac{a_jx+b_j}{c_jx+d_j}
 \sum_{j=1}^s\frac{a_jd_j-b_jc_j}{(a_jx+b_j)(c_jx+d_j)}.
 \enm
\end{lemm}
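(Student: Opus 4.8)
The plan is to use logarithmic differentiation, which converts the product into a sum and renders the derivative routine. Write $P(x)=\prod_{j=1}^s\frac{a_jx+b_j}{c_jx+d_j}$ for the function on the left-hand side. Since $P(x)$ is a product of quotients of linear factors, its logarithmic derivative decomposes additively: formally, $\frac{P'(x)}{P(x)}=\sum_{j=1}^s\frac{d}{dx}\bigl(\log(a_jx+b_j)-\log(c_jx+d_j)\bigr)$. The identity we want is then just this relation multiplied through by $P(x)$, once each summand has been simplified.

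First I would differentiate each summand, obtaining $\frac{a_j}{a_jx+b_j}-\frac{c_j}{c_jx+d_j}$ for the $j$-th term. Next I would combine the two fractions over the common denominator $(a_jx+b_j)(c_jx+d_j)$; the numerator becomes $a_j(c_jx+d_j)-c_j(a_jx+b_j)$, in which the $a_jc_jx$ contributions cancel, leaving exactly $a_jd_j-b_jc_j$. Summing over $j$ yields $\frac{P'(x)}{P(x)}=\sum_{j=1}^s\frac{a_jd_j-b_jc_j}{(a_jx+b_j)(c_jx+d_j)}$, and multiplying by $P(x)$ gives the asserted formula.

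There is no serious obstacle here; the only point requiring care is that logarithmic differentiation is a formal manipulation presupposing the factors are nonzero and a choice of branch of the logarithm over $\mathbb{C}$. To sidestep this, I would instead present the computation as an induction on $s$. The base case $s=1$ is the quotient rule applied to $\frac{a_1x+b_1}{c_1x+d_1}$, giving $\frac{a_1d_1-b_1c_1}{(c_1x+d_1)^2}$, which matches $\frac{a_1x+b_1}{c_1x+d_1}\cdot\frac{a_1d_1-b_1c_1}{(a_1x+b_1)(c_1x+d_1)}$; the inductive step then follows from the product rule $\mathcal{D}_x(fg)=(\mathcal{D}_xf)g+f(\mathcal{D}_xg)$ combined with the single-factor computation. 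Either route is a short verification, but the logarithmic-derivative form is the conceptual heart: specializing to $a_j=c_j=1$ turns the right-hand sum into a telescoping expression in which the shifted-factorial ratios produce precisely the generalized harmonic numbers $H_n(x)$, which is what links this lemma to the summation formulae developed later.
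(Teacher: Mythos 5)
Your proposal is correct and, in the form you say you would actually write it down (induction on $s$, with the base case $s=1$ handled by the quotient rule and the inductive step by the product rule $\mathcal{D}_x(fg)=(\mathcal{D}_xf)g+f(\mathcal{D}_xg)$), it coincides with the paper's own proof. The logarithmic-differentiation viewpoint you lead with is a sound conceptual gloss on the same computation, but the paper does not use it.
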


\begin{proof}
It is not difficult to verify the case $s=1$ of Lemma \ref{lemm-a}.
Suppose that
 \bnm
\mathcal{D}_x\prod_{j=1}^m\frac{a_jx+b_j}{c_jx+d_j}=\prod_{j=1}^m\frac{a_jx+b_j}{c_jx+d_j}
 \sum_{j=1}^m\frac{a_jd_j-b_jc_j}{(a_jx+b_j)(c_jx+d_j)}
 \enm
is true. We can proceed as follows:
 \bnm
&&\xqdn\mathcal{D}_x\prod_{j=1}^{m+1}\frac{a_jx+b_j}{c_jx+d_j}=
 \mathcal{D}_x\bigg\{\prod_{j=1}^{m}\frac{a_jx+b_j}{c_jx+d_j}\frac{a_{m+1}x+b_{m+1}}{c_{m+1}x+d_{m+1}}\bigg\}\\
&&\xqdn\:=\:\frac{a_{m+1}x+b_{m+1}}{c_{m+1}x+d_{m+1}}\mathcal{D}_x\prod_{j=1}^{m}\frac{a_jx+b_j}{c_jx+d_j}
+\prod_{j=1}^{m}\frac{a_jx+b_j}{c_jx+d_j}\mathcal{D}_x\frac{a_{m+1}x+b_{m+1}}{c_{m+1}x+d_{m+1}}\\
&&\xqdn\:=\:\frac{a_{m+1}x+b_{m+1}}{c_{m+1}x+d_{m+1}}\prod_{j=1}^m\frac{a_jx+b_j}{c_jx+d_j}\sum_{j=1}^m\frac{a_jd_j-b_jc_j}{(a_jx+b_j)(c_jx+d_j)}\\
&&\xqdn\:+\:\prod_{j=1}^{m}\frac{a_jx+b_j}{c_jx+d_j}\frac{a_{m+1}d_{m+1}-b_{m+1}c_{m+1}}{(c_{m+1}x+d_{m+1})^2}\\
&&\xqdn\:=\:\prod_{j=1}^{m+1}\frac{a_jx+b_j}{c_jx+d_j}
 \bigg\{\sum_{j=1}^m\frac{a_jd_j-b_jc_j}{(a_jx+b_j)(c_jx+d_j)}+\frac{a_{m+1}d_{m+1}-b_{m+1}c_{m+1}}{(a_{m+1}x+b_{m+1})(c_{m+1}x+d_{m+1})}\bigg\}\\
&&\xqdn\:=\:\prod_{j=1}^{m+1}\frac{a_jx+b_j}{c_jx+d_j}
 \sum_{j=1}^{m+1}\frac{a_jd_j-b_jc_j}{(a_jx+b_j)(c_jx+d_j)}.
 \enm
This proves Lemma \ref{lemm-a} inductively.
\end{proof}

Setting $a_j=1,b_j=r-j+1,c_j=0,d_j=j$ in Lemma \ref{lemm-a}, it is
easy to find that
$$\mathcal{D}_x\:\binm{x+r}{s}=\binm{x+r}{s}\big\{H_r(x)-H_{r-s}(x)\big\},$$
where $r,s\in\mathbb{N}_0$ with $s\leq r$. Besides, we have the
following relation:
$$\mathcal{D}_xH_{n}^{\langle \ell\rangle}(x)=-\ell H_{n}^{\langle
\ell+1\rangle}(x).$$

As pointed out by Richard Askey (cf. \cito{andrews}), expressing
harmonic numbers in accordance with differentiation of binomial
coefficients can be traced back to Issac Newton.
 In 2003, Paule and Schneider
\cito{paule} computed the family of series:
 \bnm
\quad
W_n(\alpha)=\sum_{k=0}^n\binm{n}{k}^{\alpha}\{1+\alpha(n-2k)H_k\}
 \enm
with $\alpha=1,2,3,4,5$ by combining this way with Zeilberger's
algorithm for definite hypergeometric sums. According to the
derivative operator and the hypergeometric form of Andrews'
$q$-series transformation, Krattenthaler and Rivoal
\cito{krattenthaler} deduced general Paule-Schneider type identities
with $\alpha$ being a positive integer.  More results from
differentiation of binomial coefficients can be seen in the papers
\cite{kn:sofo-a,kn:wang-b,kn:wei-a,kn:wei-b}. For different ways and
related harmonic number identities, the reader may refer to
\cite{kn:chen,kn:kronenburg-a,
kn:kronenburg-b,kn:schneider,kn:sofo-b,kn:wang-a}. It should be
mentioned that Sun \cito{sun} showed recently some congruence
relations concerning harmonic numbers to us.

Inspired by the work just mentioned, we shall explore, by means of
the derivative operator, integral operator and \eqref{saalschutz},
closed expressions for the following two families of series:
 \bnm
 &&\:\sum_{k=0}^{n}(-1)^k\binm{n}{k}\frac{\binm{2x-y+n+k}{k}\binm{y+k}{k}}{\binm{x+k}{k}^2}
 \frac{\binm{y}{t}}{\binm{y+k}{t}}H_{k}^{\langle2\rangle}(x),\\
 &&\:\sum_{k=0}^{n}(-1)^k\binm{n}{k}\frac{\binm{y+k}{k}}{\binm{y-n+k}{k}}
 \frac{\binm{y}{t}}{\binm{y+k}{t}}H_{k}^{\langle\ell\rangle}(x),
 \enm
where $t\in\mathbb{N}$. In order to avoid appearance of complicated
expressions, our explicit formulae are offered only for $t=1,2$ and
$\ell=1,2,3,4$.

\section{The first family of summation formulae involving\\ generalized harmonic numbers}
\begin{thm} \label{thm-a}
Let $x$ and $y$ be both complex numbers. Then
 \bnm
&&\sum_{k=0}^n(-1)^k\binm{n}{k}\frac{\binm{2x-y+n+k}{k}\binm{y+k}{k}}{\binm{x+k}{k}^2}
\frac{y}{y+k}H_{k}^{\langle2\rangle}(x)\\
&&\:\,=\frac{\binm{x-y+n}{n}^2}{\binm{x+n}{n}^2}\Big\{H_{n}^{\langle2\rangle}(x)-H_{n}^{\langle2\rangle}(x-y)\Big\}.
 \enm
\end{thm}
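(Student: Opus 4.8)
The plan is to realize the left-hand side as one half of a second $\epsilon$-derivative, at $\epsilon=0$, of a balanced $_3F_2$ whose value is furnished by Saalsch\"utz's theorem. First I would pass to hypergeometric form. Since
\[\binom{y+k}{k}\frac{y}{y+k}=\frac{(y)_k}{(1)_k},\qquad \binom{2x-y+n+k}{k}=\frac{(2x-y+n+1)_k}{(1)_k},\qquad \binom{x+k}{k}^2=\frac{(x+1)_k(x+1)_k}{(1)_k(1)_k},\]
the coefficient of $H_k^{\langle2\rangle}(x)$ is $c_k(x):=\dfrac{(-n)_k\,(2x-y+n+1)_k\,(y)_k}{(1)_k\,(x+1)_k\,(x+1)_k}$, and the stripped series $\sum_k c_k(x)$ is Saalsch\"utzian: its two lower parameters satisfy $(x+1)+(x+1)=1+\bigl[(-n)+(2x-y+n+1)+y\bigr]$.

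The crucial device is a \emph{symmetric} perturbation of the two equal lower parameters $x$: I replace $\binom{x+k}{k}^2$ by $\binom{x+\epsilon+k}{k}\binom{x-\epsilon+k}{k}$ and set
\[\Phi(\epsilon)=\sum_{k=0}^n(-1)^k\binom nk\frac{\binom{2x-y+n+k}{k}\binom{y+k}{k}}{\binom{x+\epsilon+k}{k}\binom{x-\epsilon+k}{k}}\frac{y}{y+k}.\]
Because $(x+\epsilon)+(x-\epsilon)=2x$, the upper parameter $2x-y+n+1$ is untouched and the series remains balanced for every $\epsilon$, so \eqref{saalschutz} applies verbatim and gives the symmetric closed form
\[\Phi(\epsilon)=\frac{\binom{x-y+\epsilon+n}{n}\binom{x-y-\epsilon+n}{n}}{\binom{x+\epsilon+n}{n}\binom{x-\epsilon+n}{n}}.\]

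Next I would differentiate twice in $\epsilon$ and set $\epsilon=0$. Writing $\phi_k(\epsilon)$ for the $k$-th summand, logarithmic differentiation together with $\mathcal{D}_xH_k(x)=-H_k^{\langle2\rangle}(x)$ (the $\ell=1$ case of the relation recorded before Lemma~\ref{lemm-a}) gives $\frac{d}{d\epsilon}\log\phi_k(\epsilon)=H_k(x-\epsilon)-H_k(x+\epsilon)$, which vanishes at $\epsilon=0$. Hence in $\phi_k''(0)=\phi_k(0)\bigl[(\log\phi_k)''(0)+((\log\phi_k)'(0))^2\bigr]$ the squared-harmonic term drops out, while $(\log\phi_k)''(0)=2H_k^{\langle2\rangle}(x)$, so $\tfrac12\phi_k''(0)=c_k(x)H_k^{\langle2\rangle}(x)$ with no stray $H_k(x)^2$ and no harmonic numbers of argument $2x-y+n$. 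Summing over $k$ identifies the left-hand side of the theorem with $\tfrac12\Phi''(0)$. It then remains to differentiate the closed form: putting $g(t)=\binom{t-y+n}{n}/\binom{t+n}{n}$ one has $\Phi(\epsilon)=g(x+\epsilon)g(x-\epsilon)$, whence $\tfrac12\Phi''(0)=g(x)g''(x)-g'(x)^2$. Since $g'/g=H_n(x-y)-H_n(x)$ and a further differentiation via $\mathcal{D}_xH_n(x)=-H_n^{\langle2\rangle}(x)$ yields $g''=g\{(H_n(x-y)-H_n(x))^2+H_n^{\langle2\rangle}(x)-H_n^{\langle2\rangle}(x-y)\}$, the square cancels and $g(x)g''(x)-g'(x)^2=g(x)^2\{H_n^{\langle2\rangle}(x)-H_n^{\langle2\rangle}(x-y)\}$, which is the asserted right-hand side because $g(x)^2=\binom{x-y+n}{n}^2/\binom{x+n}{n}^2$.

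The main obstacle is one of insight rather than of computation: one must see that perturbing the two lower parameters \emph{antisymmetrically} as $x\pm\epsilon$ is exactly what keeps the series balanced while isolating the pure second-order harmonic number. A direct differentiation of the original series in $x$ fails, precisely because $x$ also sits in the upper parameter $2x-y+n+1$, so it would generate unwanted factors $H_k(2x-y+n)$ and $H_k(x)^2$; the symmetric $\pm\epsilon$ choice both freezes the upper parameter and forces the first-order term to vanish, leaving $H_k^{\langle2\rangle}(x)$ alone. Once this device is in place every remaining step is a routine application of Saalsch\"utz's theorem and the two differentiation rules above.
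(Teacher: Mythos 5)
Your proof is correct, and it reaches the identity by a route that differs in mechanics, though not in spirit, from the paper's. Both arguments hinge on the same structural fact: the $_3F_2$ underlying the left-hand side is Saalsch\"utzian with its two lower parameters in confluence at $1+x$, and one must pull those parameters apart before differentiating. The paper does this by keeping one lower parameter at $1+x$ and letting the other be $1+y+z-x-n$ with a free $z$ (equation \eqref{saalschutz-a}); it then applies $\mathcal{D}_x$ once, rewrites the resulting factor $H_k(y+z-x-n)-H_k(x)$ as $(2x-y-z+n)\sum_{i}\frac{1}{(x+i)(y+z-x-n+i)}$, divides, and sends $z\to 2x-y+n$, using L'H\^opital on the right-hand side (\eqref{limit-a}--\eqref{limit-b}) to produce $H_n^{\langle2\rangle}$. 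You instead split the parameters antisymmetrically as $x\pm\epsilon$, which keeps the series balanced with the upper parameters frozen, and take a genuine second derivative at $\epsilon=0$; the vanishing of the first logarithmic derivative at the symmetric point then kills the $H_k(x)^2$ cross terms on both sides at once, so no L'H\^opital computation or divided-difference rewriting is needed. Your version is arguably cleaner and more self-explanatory for this particular theorem; the paper's version, with its separate deformation parameter $z$ sent to a value chosen case by case, is the one that extends uniformly to the second family in Section 3 (where the limit is $z\to y-n$ and the two lower parameters do not coalesce), so each approach has its advantages. All of your intermediate formulas --- the Saalsch\"utz evaluation of $\Phi(\epsilon)$, the identities $\tfrac12\phi_k''(0)=c_k(x)H_k^{\langle2\rangle}(x)$ and $\tfrac12\Phi''(0)=g(x)g''(x)-g'(x)^2=g(x)^2\{H_n^{\langle2\rangle}(x)-H_n^{\langle2\rangle}(x-y)\}$ --- check out.
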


\begin{proof}
Perform the replacements $a\to1+z$, $b\to y$, $c\to1+x$ in
\eqref{saalschutz} to get
 \bmn\label{saalschutz-a}
\qquad\quad\sum_{k=0}^n(-1)^k\binm{n}{k}\frac{\binm{z+k}{k}\binm{y+k}{k}}{\binm{x+k}{k}\binm{y+z-x-n+k}{k}}\frac{y}{y+k}
=\frac{\binm{x-y+n}{n}\binm{x-z-1+n}{n}}{\binm{x+n}{n}\binm{x-y-z-1+n}{n}}.
 \emn
Applying the derivative operator $\mathcal{D}_x$ to both sides of
\eqref{saalschutz-a}, we gain
 \bnm
&&\xqdn\qdn\sum_{k=0}^n(-1)^k\binm{n}{k}\frac{\binm{z+k}{k}\binm{y+k}{k}}{\binm{x+k}{k}\binm{y+z-x-n+k}{k}}\frac{y}{y+k}
\Big\{H_k(y+z-x-n)-H_k(x)\Big\}\\
&&\xqdn\qdn\,\,=\,\frac{\binm{x-y+n}{n}\binm{x-z-1+n}{n}}{\binm{x+n}{n}\binm{x-y-z-1+n}{n}}
\Big\{H_n(x-y)+H_n(x-z-1)-H_n(x)-H_n(x-y-z-1)\Big\}.
 \enm
The equivalent form of it reads as
 \bmn\label{harmonic-a}
&&\xxqdn\sum_{k=0}^n(-1)^k\binm{n}{k}\frac{\binm{z+k}{k}\binm{y+k}{k}}{\binm{x+k}{k}\binm{y+z-x-n+k}{k}}\frac{y}{y+k}
\sum_{i=1}^k\frac{1}{(x+i)(y+z-x-n+i)}
 \nnm\\
&&\xxqdn\,\,=\,\frac{\binm{x-y+n}{n}\binm{x-z-1+n}{n}}{\binm{x+n}{n}\binm{x-y-z-1+n}{n}}
\bigg\{\frac{H_n(x-y)\!+\!H_n(x-z-1)}{2x-y-z+n}\!-\!\frac{H_n(x)\!+\!H_n(x-y-z-1)}{2x-y-z+n}\bigg\}.
 \emn
By means of L'H\^{o}spital rule, we achieve
 \bmn\label{limit-a}
&&\xqdn\text{Lim}_{z\to2x-y+n}\frac{H_n(x-y)+H_n(x-z-1)}{2x-y-z+n}
 \nnm\\\nnm
&&\xqdn\:\:=\:\text{Lim}_{z\to2x-y+n}\frac{H_n^{\langle2\rangle}(x-z-1)}{-1}\\
&&\xqdn\:\:=\:-H_n^{\langle2\rangle}(y-x-n-1)
 \nnm\\
&&\xqdn\:\:=\:-H_n^{\langle2\rangle}(x-y),
 \emn
 \bmn\label{limit-b}
&&\xqdn\text{Lim}_{z\to2x-y+n}\frac{H_n(x)+H_n(x-y-z-1)}{2x-y-z+n}
 \nnm\\\nnm
&&\xqdn\:\:=\:\text{Lim}_{z\to2x-y+n}\frac{H_n^{\langle2\rangle}(x-y-z-1)}{-1}\\
&&\xqdn\:\:=\:-H_n^{\langle2\rangle}(-x-n-1)
 \nnm\\
&&\xqdn\:\:=\:-H_n^{\langle2\rangle}(x).
 \emn
Taking the limit $z\to2x-y+n$ on both sides of \eqref{harmonic-a}
and using \eqref{limit-a}-\eqref{limit-b}, we attain Theorem
\ref{thm-a} to complete the proof.
\end{proof}

Choosing $x=p$, $y=q$ in  Theorem \ref{thm-a} with
$p,q\in\mathbb{N}_0$ and utilizing \eqref{saalschutz-a}, we obtain
the summation formula involving harmonic numbers of 2-order.

\begin{corl} \label{corl-a}
Let $p$ and $q$ be both nonnegative integers satisfying $p\geq q$.
Then
 \bnm
&&\qdn\qqdn\sum_{k=0}^n(-1)^k\binm{n}{k}\frac{\binm{2p-q+n+k}{k}\binm{q+k}{k}}{\binm{p+k}{k}^2}\frac{q}{q+k}H_{p+k}^{\langle2\rangle}\\
&&\qdn\qqdn\:\,=\frac{\binm{p-q+n}{n}^2}{\binm{p+n}{n}^2}\Big\{H_{p-q}^{\langle2\rangle}+H_{p+n}^{\langle2\rangle}-H_{p-q+n}^{\langle2\rangle}\Big\}.
 \enm
\end{corl}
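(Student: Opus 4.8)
The plan is to specialize Theorem~\ref{thm-a} to $x=p$ and $y=q$ with $p,q\in\mathbb{N}_0$, $p\geq q$, and then rewrite every shifted harmonic number of $2$-order as a difference of ordinary ones. The key elementary fact is that for a nonnegative integer $p$,
\[ H_k^{\langle 2\rangle}(p)=\sum_{i=1}^k\frac{1}{(p+i)^2}=\sum_{j=p+1}^{p+k}\frac{1}{j^2}=H_{p+k}^{\langle 2\rangle}-H_p^{\langle 2\rangle}, \]
and likewise $H_n^{\langle 2\rangle}(p)=H_{p+n}^{\langle 2\rangle}-H_p^{\langle 2\rangle}$ together with $H_n^{\langle 2\rangle}(p-q)=H_{p-q+n}^{\langle 2\rangle}-H_{p-q}^{\langle 2\rangle}$; the last of these needs $p-q\in\mathbb{N}_0$, which is exactly the hypothesis $p\geq q$.

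First I would put $H_k^{\langle 2\rangle}(p)=H_{p+k}^{\langle 2\rangle}-H_p^{\langle 2\rangle}$ into the left-hand side of Theorem~\ref{thm-a} and break the sum into two pieces. Writing $L$ for the left-hand side of Corollary~\ref{corl-a} and
\[ S=\sum_{k=0}^n(-1)^k\binom{n}{k}\frac{\binom{2p-q+n+k}{k}\binom{q+k}{k}}{\binom{p+k}{k}^2}\frac{q}{q+k} \]
for the same sum stripped of its harmonic factor, this splitting says precisely that the left-hand side of Theorem~\ref{thm-a} at $x=p$, $y=q$ equals $L-H_p^{\langle 2\rangle}S$.

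The next step is to evaluate $S$ in closed form from \eqref{saalschutz-a}. Specializing $x=p$, $y=q$ and, crucially, $z=2p-q+n$ collapses the lower factor $\binom{y+z-x-n+k}{k}$ to $\binom{p+k}{k}$, so that \eqref{saalschutz-a} reproduces $S$ verbatim and evaluates it as
\[ S=\frac{\binom{p-q+n}{n}\binom{q-p-1}{n}}{\binom{p+n}{n}\binom{-p-1}{n}}. \]
Invoking the reflection rule $\binom{-m}{n}=(-1)^n\binom{m+n-1}{n}$ on $\binom{q-p-1}{n}=(-1)^n\binom{p-q+n}{n}$ and $\binom{-p-1}{n}=(-1)^n\binom{p+n}{n}$ cancels the signs and gives the clean value $S=\binom{p-q+n}{n}^2\big/\binom{p+n}{n}^2$.

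Finally I would assemble the pieces. Converting the three harmonic numbers on the right-hand side of Theorem~\ref{thm-a} by the identities of the first paragraph turns that side into $S\big\{H_{p+n}^{\langle 2\rangle}-H_p^{\langle 2\rangle}-H_{p-q+n}^{\langle 2\rangle}+H_{p-q}^{\langle 2\rangle}\big\}$, while the first step identifies the corresponding left-hand side as $L-H_p^{\langle 2\rangle}S$. Equating the two and solving for $L$ gives
\[ L=S\big\{H_{p+n}^{\langle 2\rangle}-H_p^{\langle 2\rangle}-H_{p-q+n}^{\langle 2\rangle}+H_{p-q}^{\langle 2\rangle}\big\}+H_p^{\langle 2\rangle}S, \]
and the two $H_p^{\langle 2\rangle}S$ terms cancel, leaving exactly $S\big\{H_{p-q}^{\langle 2\rangle}+H_{p+n}^{\langle 2\rangle}-H_{p-q+n}^{\langle 2\rangle}\big\}$, which is the claimed right-hand side. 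The one genuinely delicate step is the reduction of $S$: one must hit on the specialization $z=2p-q+n$ that forces the two lower binomials in \eqref{saalschutz-a} to coincide, and then dispose of the negative upper indices through the reflection identity. Everything else is routine bookkeeping, and the proof hinges on the tidy cancellation of the $H_p^{\langle 2\rangle}$ contributions.
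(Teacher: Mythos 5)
Your argument is correct and is exactly the route the paper intends: specialize Theorem~\ref{thm-a} at $x=p$, $y=q$, rewrite $H_k^{\langle2\rangle}(p)$ and $H_n^{\langle2\rangle}(p-q)$ as differences of ordinary $2$-order harmonic numbers (using $p\geq q$), and evaluate the stripped sum $S$ from \eqref{saalschutz-a} at $z=2p-q+n$, where the $H_p^{\langle2\rangle}S$ terms cancel. The paper gives only the one-line indication ``choosing $x=p$, $y=q$ in Theorem~\ref{thm-a} and utilizing \eqref{saalschutz-a}''; your write-up supplies precisely the missing bookkeeping, including the correct reflection $\binom{q-p-1}{n}=(-1)^n\binom{p-q+n}{n}$, $\binom{-p-1}{n}=(-1)^n\binom{p+n}{n}$.
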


\begin{thm} \label{thm-b}
Let $x$ and $y$ be both complex numbers. Then
 \bnm
&&\xqdn\sum_{k=0}^n(-1)^k\binm{n}{k}\frac{\binm{2x-y+n+k}{k}\binm{y+k}{k}}{\binm{x+k}{k}^2}
\frac{(y-1)y}{(y+k-1)(y+k)}H_{k}^{\langle2\rangle}(x)\\
&&\xqdn\:\,=\frac{n^2+n(1+2x-y)+(1+x-y)^2}{(1+x-y)^2}\frac{\binm{x-y+n}{n}^2}{\binm{x+n}{n}^2}
\Big\{H_{n}^{\langle2\rangle}(x)-H_{n}^{\langle2\rangle}(x-y)\Big\}\\
&&\xqdn\:\,+\:\frac{n^2+n(1+2x-y)}{(1+x-y)^4}\frac{\binm{x-y+n}{n}^2}{\binm{x+n}{n}^2}.
 \enm
\end{thm}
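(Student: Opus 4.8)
The plan is to reduce Theorem \ref{thm-b} to the very mechanism already used for Theorem \ref{thm-a}, after first splitting the extra weight into two Saalsch\"utzian pieces. Using the partial fraction decomposition
\[
\frac{(y-1)y}{(y+k-1)(y+k)}=\frac{y}{y+k}-\frac{yk}{(y+k-1)(y+k)}
\]
together with the elementary simplification $\binm{y+k}{k}\frac{yk}{(y+k-1)(y+k)}=\binm{y+k-2}{k-1}$, I would introduce the free-parameter generating sum
\[
T(z)=\sum_{k=0}^n(-1)^k\binm{n}{k}\frac{\binm{z+k}{k}\binm{y+k}{k}}{\binm{x+k}{k}\binm{y+z-x-n+k}{k}}\frac{(y-1)y}{(y+k-1)(y+k)}
\]
and write it as $T(z)=P_1(z)-P_2(z)$, where $P_1(z)$ is exactly the left-hand side of \eqref{saalschutz-a} (so its closed form is known), and
\[
P_2(z)=\sum_{k=1}^n(-1)^k\binm{n}{k}\frac{\binm{z+k}{k}\binm{y+k-2}{k-1}}{\binm{x+k}{k}\binm{y+z-x-n+k}{k}}.
\]
The point of this split is that it keeps the first piece balanced (hence summable by \eqref{saalschutz}) while isolating a second piece that is also summable, whereas the $t=2$ weight applied directly would destroy balance.

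The first step is to evaluate $P_2(z)$ in closed form. Reindexing $k\to k+1$ turns $P_2(z)$ into a constant multiple of a terminating series whose upper parameters are $1-n,\,z+2,\,y$ and whose lower parameters are $x+2,\,y+z-x-n+2$. A direct check of the balancing condition shows this series is Saalsch\"utzian with $n$ replaced by $n-1$, so \eqref{saalschutz} applies and expresses $P_2(z)$ as a ratio of shifted factorials. Combined with \eqref{saalschutz-a}, this gives a completely explicit evaluation of $T(z)$ for the free parameter $z$.

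The second step copies the proof of Theorem \ref{thm-a} verbatim. Because the weight $\frac{(y-1)y}{(y+k-1)(y+k)}$ is independent of $x$, applying $\mathcal{D}_x$ to both sides of the evaluation of $T(z)$ produces, on the summand side, the same companion factor $\{H_k(y+z-x-n)-H_k(x)\}$ as in the derivation of \eqref{harmonic-a}; dividing through by $2x-y-z+n$ exposes the product $\sum_{i=1}^k\frac{1}{(x+i)(y+z-x-n+i)}$ and makes the right-hand side a $0/0$ form, and then the limit $z\to 2x-y+n$ with L'H\^opital's rule (exactly as in \eqref{limit-a}--\eqref{limit-b}) collapses the coincident denominators $\binm{x+k}{k}\binm{y+z-x-n+k}{k}\to\binm{x+k}{k}^2$ and converts the first-order harmonic sums into $H_k^{\langle2\rangle}(x)$ on the left and into $H_n^{\langle2\rangle}(x)-H_n^{\langle2\rangle}(x-y)$ on the right. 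The $P_1$-contribution reproduces precisely the right-hand side of Theorem \ref{thm-a}; the $P_2$-contribution supplies the remaining terms.

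The hard part will be the last step: differentiating the closed form of $P_2(z)$ in $x$, carrying out the $z\to 2x-y+n$ limit, and then adding the result to the $P_1$-contribution and simplifying. One must re-express the shifted factorials coming from $P_2$ in terms of $\binm{x-y+n}{n}$ and $\binm{x+n}{n}$; in particular $(x-y+2)_{n-1}=n!\binm{x-y+n}{n}/(1+x-y)$ is the source of the factor $1+x-y$ in the denominators, and the differentiation together with the L'H\^opital expansion generates the higher powers. Tracking these powers and the accompanying polynomial factors so as to match the stated prefactors $\frac{n^2+n(1+2x-y)+(1+x-y)^2}{(1+x-y)^2}$ and $\frac{n^2+n(1+2x-y)}{(1+x-y)^4}$ is the only genuinely laborious part of the argument; everything else is parallel to Theorem \ref{thm-a}.
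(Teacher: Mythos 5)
Your plan is correct, and it reaches the theorem by a genuinely different decomposition than the paper's. The paper never splits the sum: it first manufactures a single closed-form evaluation \eqref{saalschutz-b} of the unbalanced $_3F_2$ by combining Saalsch\"utz's theorem with its $c\to1+c$ contiguous version (a contiguity in a \emph{lower} parameter), specializes it to \eqref{saalschutz-c} so that the weight $\frac{(y-1)y}{(y+k-1)(y+k)}=\frac{(y-1)_k}{(1+y)_k}$ is absorbed into the upper parameter $b=y-1$, and only then applies $\mathcal{D}_x$, divides by $2x-y-z+n$, and lets $z\to2x-y+n$. Your route performs the analogous reduction on the series side via the partial fraction $\frac{(y-1)y}{(y+k-1)(y+k)}=\frac{y}{y+k}-\frac{yk}{(y+k-1)(y+k)}$, which is exactly the upper-parameter contiguity $(y-1)_k=(y)_k-k(y)_{k-1}$; your identities $\binm{y+k}{k}\frac{yk}{(y+k-1)(y+k)}=\binm{y+k-2}{k-1}$ and the Saalsch\"utzian balance of the reindexed $P_2$ (upper $1-n,\,z+2,\,y$; lower $x+2,\,y+z-x-n+2$) both check out, so $T(z)=P_1(z)-P_2(z)$ is evaluable and must reproduce the right side of \eqref{saalschutz-c}. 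The differentiation-and-limit engine then runs unchanged; in particular the logarithmic derivative of the closed form of $P_2$ also vanishes at $z=2x-y+n$ (the terms $-\frac{1}{x+1}+\frac{1}{y+z-x-n+1}$ cancel there, as do the reflected sums coming from $(x-z)_{n-1}$ against $(x-y-z)_{n-1}$ and $(x+2-y)_{n-1}$ against $(x+2)_{n-1}$), so the $0/0$ structure needed for L'H\^opital survives piece by piece and there is no obstruction to the deferred algebra. What the paper's route buys is compactness: the right-hand side stays a single product times a rational prefactor, so Lemma \ref{lemm-a} applies in one shot and the extra term $\frac{n^2+n(1+2x-y)}{(1+x-y)^4}$ drops out of differentiating that prefactor. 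What your route buys is that each piece is individually a clean Saalsch\"utz evaluation ($P_1$ literally reproduces Theorem \ref{thm-a}), at the cost of re-expressing the length-$(n-1)$ factorials of $P_2$ in terms of $\binm{x-y+n}{n}$ and $\binm{x+n}{n}$ before the final recombination -- which, as you say, is where all the labor sits.
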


\begin{proof}
Replace $c$ by $1+c$ in \eqref{saalschutz} to get
 \bnm\qquad\qquad
 _3F_2\ffnk{cccc}{1}{a,b,-n}{1+c,a+b-c-n}
=\frac{(1+c-a)_n(1+c-b)_n}{(1+c)_n(1+c-a-b)_n}.
 \enm
The combination of \eqref{saalschutz} and the last equation gives
\bmn\label{saalschutz-b}
 \quad_3F_2\ffnk{cccc}{1}{a,b,-n}{1+c,1+a+b-c-n}
=\bigg\{1+\frac{n(c-a-b)}{(c-a)(c-b)}\bigg\}\frac{(c-a)_n(c-b)_n}{(1+c)_n(c-a-b)_n}.
 \emn
Employ the substitutions $a\to1+z$, $b\to y-1$, $c\to x$ in
\eqref{saalschutz-b} to gain
 \bmn\label{saalschutz-c}
&&\qqdn\xxqdn\sum_{k=0}^n(-1)^k\binm{n}{k}\frac{\binm{z+k}{k}\binm{y+k}{k}}{\binm{x+k}{k}\binm{y+z-x-n+k}{k}}\frac{(y-1)y}{(y+k-1)(y+k)}
 \nnm\\
&&\qqdn\xxqdn\,\,=\,\frac{(x-y+1)(x-z-1)+n(x-y-z)}{(x-y+1)(x-z-1+n)}
\frac{\binm{x-y+n}{n}\binm{x-z-1+n}{n}}{\binm{x+n}{n}\binm{x-y-z-1+n}{n}}.
 \emn
Applying the derivative operator $\mathcal{D}_x$ to both sides of
\eqref{saalschutz-c}, we achieve
 \bnm
&&\xqdn\sum_{k=0}^n(-1)^k\binm{n}{k}\frac{\binm{z+k}{k}\binm{y+k}{k}}{\binm{x+k}{k}\binm{y+z-x-n+k}{k}}
\frac{(y-1)y}{(y+k-1)(y+k)}\Big\{H_k(y+z-x-n)-H_k(x)\Big\}\\
&&\xqdn\,\,=\,\frac{(x-y+1)(x-z-1)+n(x-y-z)}{(x-y+1)(x-z-1+n)}
\frac{\binm{x-y+n}{n}\binm{x-z-1+n}{n}}{\binm{x+n}{n}\binm{x-y-z-1+n}{n}}\\
&&\xqdn\,\,\times\:\:\Big\{H_n(x-y)+H_n(x-z-1)-H_n(x)-H_n(x-y-z-1)\Big\}\\
&&\xqdn\,\,+\,\,\frac{n(z+1)(2x-y-z+n)}{(x-y+1)^2(x-z-1+n)^2}
\frac{\binm{x-y+n}{n}\binm{x-z-1+n}{n}}{\binm{x+n}{n}\binm{x-y-z-1+n}{n}}.
 \enm
The equivalent form of it can be expressed as
 \bnm
&&\xqdn\sum_{k=0}^n(-1)^k\binm{n}{k}\frac{\binm{z+k}{k}\binm{y+k}{k}}{\binm{x+k}{k}\binm{y+z-x-n+k}{k}}\frac{(y-1)y}{(y+k-1)(y+k)}
\sum_{i=1}^k\frac{1}{(x+i)(y+z-x-n+i)}\\
&&\xqdn\,\,=\,\frac{(x-y+1)(x-z-1)+n(x-y-z)}{(x-y+1)(x-z-1+n)}
\frac{\binm{x-y+n}{n}\binm{x-z-1+n}{n}}{\binm{x+n}{n}\binm{x-y-z-1+n}{n}}\\
&&\xqdn\,\,\times\:\:\bigg\{\frac{H_n(x-y)\!+\!H_n(x-z-1)}{2x-y-z+n}\!-\!\frac{H_n(x)\!+\!H_n(x-y-z-1)}{2x-y-z+n}\bigg\}\\
&&\xqdn\,\,+\,\,\frac{n(z+1)}{(x-y+1)^2(x-z-1+n)^2}
\frac{\binm{x-y+n}{n}\binm{x-z-1+n}{n}}{\binm{x+n}{n}\binm{x-y-z-1+n}{n}}.
 \enm
Taking the limit $z\to2x-y+n$ on both sides of the last equation and
exploiting \eqref{limit-a}-\eqref{limit-b}, we attain Theorem
\ref{thm-b} to finish the proof.
\end{proof}

Selecting $x=p$, $y=q$ in  Theorem \ref{thm-b} with
$p,q\in\mathbb{N}_0$ and availing \eqref{saalschutz-c}, we obtain
the summation formula involving harmonic numbers of 2-order.

\begin{corl} \label{corl-b}
Let $p$ and $q$ be both nonnegative integers provided that $p\geq
q$. Then
 \bnm
&&\xqdn\sum_{k=0}^n(-1)^k\binm{n}{k}\frac{\binm{2p-q+n+k}{k}\binm{q+k}{k}}{\binm{p+k}{k}^2}\frac{(q-1)q}{(q+k-1)(q+k)}H_{p+k}^{\langle2\rangle}\\
&&\xqdn\:\,=\frac{n^2+n(1+2p-q)+(1+p-q)^2}{(1+p-q)^2}\frac{\binm{p-q+n}{n}^2}{\binm{p+n}{n}^2}
\Big\{H_{p-q}^{\langle2\rangle}+H_{p+n}^{\langle2\rangle}-H_{p-q+n}^{\langle2\rangle}\Big\}\\
&&\xqdn\:\,+\:\frac{n^2+n(1+2p-q)}{(1+p-q)^4}\frac{\binm{p-q+n}{n}^2}{\binm{p+n}{n}^2}.
 \enm
\end{corl}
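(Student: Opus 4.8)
The plan is to specialize Theorem \ref{thm-b} to the integer point $x=p$, $y=q$ with $p,q\in\mathbb{N}_0$ and $p\geq q$, and then to trade each shifted generalized harmonic number $H_m^{\langle2\rangle}(a)$ for classical ones. The only ingredient beyond Theorem \ref{thm-b} is the companion summation \eqref{saalschutz-c} evaluated at $z=2p-q+n$, which furnishes exactly the coefficient needed to absorb the leftover terms.

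First I would record the telescoping identity
\[
H_m^{\langle2\rangle}(a)=\sum_{i=1}^m\frac{1}{(a+i)^2}=H_{a+m}^{\langle2\rangle}-H_a^{\langle2\rangle},
\]
valid for every $a\in\mathbb{N}_0$. Applying it with $(a,m)=(p,k)$ on the left side of Theorem \ref{thm-b}, and with $(a,m)=(p,n)$ and $(a,m)=(p-q,n)$ inside the bracket $H_n^{\langle2\rangle}(x)-H_n^{\langle2\rangle}(x-y)$ on the right, turns every harmonic factor into classical ones; the second substitution forces $p-q\in\mathbb{N}_0$, which is precisely why the hypothesis $p\geq q$ is needed. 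The replacement $H_k^{\langle2\rangle}(p)=H_{p+k}^{\langle2\rangle}-H_p^{\langle2\rangle}$ splits the left-hand sum into the sum displayed in Corollary \ref{corl-b} minus $H_p^{\langle2\rangle}$ times the harmonic-free sum
\[
S:=\sum_{k=0}^n(-1)^k\binm{n}{k}\frac{\binm{2p-q+n+k}{k}\binm{q+k}{k}}{\binm{p+k}{k}^2}\frac{(q-1)q}{(q+k-1)(q+k)},
\]
while on the right the bracket becomes $H_{p-q}^{\langle2\rangle}+H_{p+n}^{\langle2\rangle}-H_{p-q+n}^{\langle2\rangle}-H_p^{\langle2\rangle}$, so that its first three terms reproduce the bracket of Corollary \ref{corl-b} and the stray $-H_p^{\langle2\rangle}$ is carried along with the same prefactor.

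The decisive step is the cancellation of these two leftover $H_p^{\langle2\rangle}$ contributions. On the right they come multiplied by $\frac{n^2+n(1+2p-q)+(1+p-q)^2}{(1+p-q)^2}\frac{\binm{p-q+n}{n}^2}{\binm{p+n}{n}^2}$, whereas on the left they appear as $-H_p^{\langle2\rangle}S$. I would evaluate $S$ by setting $x=p$, $y=q$, $z=2p-q+n$ in \eqref{saalschutz-c}: at this value $\binm{y+z-x-n+k}{k}$ collapses to $\binm{x+k}{k}$, so $S$ is exactly the left side of \eqref{saalschutz-c}, and its right side reduces, after the reflection $\binm{-a}{n}=(-1)^n\binm{a+n-1}{n}$ applied to $\binm{x-z-1+n}{n}$ and $\binm{x-y-z-1+n}{n}$, to precisely the prefactor above. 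Hence $S$ equals that prefactor, the two $H_p^{\langle2\rangle}$ terms cancel, and Corollary \ref{corl-b} follows. The only point demanding care is this specialization of \eqref{saalschutz-c}: one must confirm that both the binomial quotient and the rational prefactor at $z=2p-q+n$ coincide with the coefficient already sitting in Theorem \ref{thm-b}. That is a finite, routine check rather than a genuine obstacle, and it exactly parallels the passage from Theorem \ref{thm-a} to Corollary \ref{corl-a} through \eqref{saalschutz-a}.
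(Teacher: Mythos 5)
Your proposal is correct and follows exactly the route the paper indicates (the paper gives only the one-line instruction to set $x=p$, $y=q$ in Theorem \ref{thm-b} and use \eqref{saalschutz-c}): the split $H_k^{\langle2\rangle}(p)=H_{p+k}^{\langle2\rangle}-H_p^{\langle2\rangle}$ produces stray $-H_p^{\langle2\rangle}$ terms on both sides, and evaluating \eqref{saalschutz-c} at $z=2p-q+n$ shows the harmonic-free sum $S$ equals the coefficient $\frac{n^2+n(1+2p-q)+(1+p-q)^2}{(1+p-q)^2}\frac{\binom{p-q+n}{n}^2}{\binom{p+n}{n}^2}$, so they cancel. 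The routine check you defer does go through, since at $z=2p-q+n$ one has $\binom{y+z-x-n+k}{k}=\binom{x+k}{k}$ and the reflection $\binom{-a}{n}=(-1)^n\binom{a+n-1}{n}$ turns the binomial quotient into $\binom{p-q+n}{n}^2/\binom{p+n}{n}^2$ with the stated rational prefactor.
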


Similarly, closed expressions for the following series
 \bnm\qquad
 \sum_{k=0}^{n}(-1)^k\binm{n}{k}\frac{\binm{2x-y+n+k}{k}\binm{y+k}{k}}{\binm{x+k}{k}^2}
 \frac{\binm{y}{t}}{\binm{y+k}{t}}H_{k}^{\langle2\rangle}(x)
 \enm
with $t\geq2$ can also be derived. The corresponding results will
not be displayed here.
\section{The second family of summation formulae involving\\ generalized harmonic numbers}
\begin{thm} \label{thm-c}
Let $x$ and $y$ be both complex numbers. Then
 \bnm
&&\xxqdn\qdn\sum_{k=0}^n(-1)^k\binm{n}{k}\frac{\binm{y+k}{k}}{\binm{y-n+k}{k}}
\frac{y}{y+k}H_{k}^{\langle2\rangle}(x)\\
&&\xxqdn\qdn\:\,=\frac{(-1)^n}{n}\frac{\binm{x-y+n}{n}}{\binm{x+n}{n}\binm{y}{n}}\Big\{H_{n}(x-y)-H_{n}(x)\Big\}.
 \enm
\end{thm}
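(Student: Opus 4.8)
The plan is to read the left-hand summand of Theorem~\ref{thm-c} as the $z\to x$ specialization of the summand in \eqref{saalschutz-a}: putting $z=x$ collapses $\binom{z+k}{k}/\binom{x+k}{k}$ to $1$ and sends $\binom{y+z-x-n+k}{k}$ to $\binom{y-n+k}{k}$, which reproduces the factor $\frac{\binom{y+k}{k}}{\binom{y-n+k}{k}}\frac{y}{y+k}$. The catch is that at $z=x$ both sides of \eqref{saalschutz-a} vanish, since $\binom{x-z-1+n}{n}$ degenerates to $\binom{n-1}{n}=0$ and the left sum becomes a terminating Chu--Vandermonde evaluation equal to $0$. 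Hence the harmonic content has to be manufactured by differentiating \emph{before} sending $z\to x$. Concretely, I would differentiate \eqref{saalschutz-a} with respect to the free parameter $z$, pass to the limit $z\to x$ to obtain a first-order auxiliary identity in $H_k(x)$, and then apply $\mathcal{D}_x$ once more to climb from $H_k(x)$ to $H_k^{\langle2\rangle}(x)$.

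For the first step, applying $\mathcal{D}_z$ to \eqref{saalschutz-a} through Lemma~\ref{lemm-a} multiplies the $k$th term on the left by $\{H_k(z)-H_k(y+z-x-n)\}$ and turns the right-hand side into $P(z)\{-H_n(x-z-1)+H_n(x-y-z-1)\}$, where $P(z)=\frac{\binom{x-y+n}{n}\binom{x-z-1+n}{n}}{\binom{x+n}{n}\binom{x-y-z-1+n}{n}}$. As $z\to x$ the left-hand side tends, term by term, to $\sum_{k=0}^n(-1)^k\binom{n}{k}\frac{\binom{y+k}{k}}{\binom{y-n+k}{k}}\frac{y}{y+k}\{H_k(x)-H_k(y-n)\}$, which is routine. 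The genuine obstacle is the right-hand side, an indeterminate form of type $0\cdot\infty$: the prefactor $\binom{x-z-1+n}{n}$ vanishes linearly while $H_n(x-z-1)$ blows up through its $i=1$ term $1/(x-z)$. I would resolve this exactly in the spirit of \eqref{limit-a}--\eqref{limit-b}, using the local evaluation $\lim_{z\to x}\binom{x-z-1+n}{n}H_n(x-z-1)=\tfrac1n$ together with $\binom{x-y-z-1+n}{n}\to\binom{n-y-1}{n}=(-1)^n\binom{y}{n}$. Only the $-H_n(x-z-1)$ branch survives, leaving the auxiliary identity $\sum_{k=0}^n(-1)^k\binom{n}{k}\frac{\binom{y+k}{k}}{\binom{y-n+k}{k}}\frac{y}{y+k}H_k(x)=\frac{(-1)^{n+1}}{n\binom{y}{n}}\frac{\binom{x-y+n}{n}}{\binom{x+n}{n}}+C$, with $C=\sum_{k=0}^n(-1)^k\binom{n}{k}\frac{\binom{y+k}{k}}{\binom{y-n+k}{k}}\frac{y}{y+k}H_k(y-n)$ independent of $x$.

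For the second step I would apply $\mathcal{D}_x$ to this auxiliary identity. The constant $C$ is annihilated; the relation $\mathcal{D}_xH_k(x)=-H_k^{\langle2\rangle}(x)$ turns the left-hand side into $-\sum_{k=0}^n(-1)^k\binom{n}{k}\frac{\binom{y+k}{k}}{\binom{y-n+k}{k}}\frac{y}{y+k}H_k^{\langle2\rangle}(x)$, while differentiating the binomial quotient on the right (again by Lemma~\ref{lemm-a}) brings out the factor $\{H_n(x-y)-H_n(x)\}$; absorbing the sign change $(-1)^{n+1}\mapsto(-1)^n$ reproduces Theorem~\ref{thm-c} verbatim. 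The same scheme, differentiating the auxiliary identity $\ell-1$ times, should deliver the companion $t=1$ formulae for $H_k^{\langle\ell\rangle}(x)$. The one delicate point throughout is the indeterminate $z\to x$ limit of the first step; everything else is bookkeeping with the differentiation rules recorded in the excerpt.
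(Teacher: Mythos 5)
Your proof is correct, but it follows a genuinely different route from the paper's. The paper does not touch \eqref{saalschutz-a} here: it starts from the companion specialization \eqref{saalschutz-d} (obtained from \eqref{saalschutz} via $a\to1+x$, $b\to y$, $c\to1+z$), in which $x$ occupies two parameter slots whose offset is $y-z-n$. A single application of $\mathcal{D}_x$ then produces the factor $H_k(x)-H_k(x+y-z-n)=(y-z-n)\sum_{i=1}^k\frac{1}{(x+i)(x+y-z-n+i)}$; dividing out the gap and letting $z\to y-n$ makes the two linear factors coalesce, so $H_k^{\langle2\rangle}(x)$ appears directly after only one differentiation, and the right-hand side limit is completely regular (the denominator $y-z$ tends to $n$, not $0$). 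You instead differentiate \eqref{saalschutz-a} in the auxiliary variable $z$, resolve a genuine $0\cdot\infty$ confluence at $z\to x$ via $\lim_{z\to x}\binom{x-z-1+n}{n}H_n(x-z-1)=1/n$, and land on a first-order identity that is Theorem~\ref{thm-d} up to an $x$-independent constant; a second differentiation $\mathcal{D}_x$ then yields Theorem~\ref{thm-c}. Your individual computations check out, including $\binom{n-y-1}{n}=(-1)^n\binom{y}{n}$ and the vanishing of the $H_n(x-y-z-1)$ branch. Note that this inverts the paper's logical order: the paper derives Theorem~\ref{thm-d} \emph{from} Theorem~\ref{thm-c} by applying $\mathcal{I}_x$ and then letting $x\to\infty$ to pin down the constant of integration, whereas you obtain the first-order statement first and never need to evaluate your constant $C$ because $\mathcal{D}_x$ annihilates it --- a small economy. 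What the paper's confluence trick buys in exchange is uniformity (the same one-differentiation mechanism drives Theorems~\ref{thm-a}, \ref{thm-b} and \ref{thm-g}) and the avoidance of the indeterminate limit that is the one delicate point of your argument.
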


\begin{proof}
Perform the replacements $a\to1+x$, $b\to y$, $c\to1+z$ in
\eqref{saalschutz} to get
 \bmn\label{saalschutz-d}
\qqdn\sum_{k=0}^n(-1)^k\binm{n}{k}\frac{\binm{x+k}{k}\binm{y+k}{k}}{\binm{z+k}{k}\binm{x+y-z-n+k}{k}}\frac{y}{y+k}
=\frac{\binm{z-x-1+n}{n}\binm{z-y+n}{n}}{\binm{z-x-y-1+n}{n}\binm{z+n}{n}}.
 \emn
Applying the derivative operator $\mathcal{D}_x$ to both sides of
\eqref{saalschutz-d}, we have
 \bnm
&&\xqdn\sum_{k=0}^n(-1)^k\binm{n}{k}\frac{\binm{x+k}{k}\binm{y+k}{k}}{\binm{z+k}{k}\binm{x+y-z-n+k}{k}}\frac{y}{y+k}
\Big\{H_k(x)-H_k(x+y-z-n)\Big\}\\
&&\xqdn\,\,=\,\frac{\binm{z-x-1+n}{n}\binm{z-y+n}{n}}{\binm{z-x-y-1+n}{n}\binm{z+n}{n}}
\Big\{H_n(z-x-y-1)-H_n(z-x-1)\Big\}.
 \enm
The equivalent form of it reads as
 \bnm
&&\xqdn\sum_{k=0}^n(-1)^k\binm{n}{k}\frac{\binm{x+k}{k}\binm{y+k}{k}}{\binm{z+k}{k}\binm{x+y-z-n+k}{k}}\frac{y}{y+k}
\sum_{i=1}^k\frac{1}{(x+i)(x+y-z-n+i)}\\
&&\xqdn\,\,=\,\frac{\binm{z-x-1+n}{n}\binm{z-y-1+n}{n}}{\binm{z-x-y-1+n}{n}\binm{z+n}{n}}
\frac{H_n(z-x-y-1)-H_n(z-x-1)}{y-z}.
 \enm
 Taking the limit $z\to y-n$ on both sides of the last equation, we gain Theorem \ref{thm-c} to
complete the proof.
\end{proof}

Fixing $x=p$, $y=q$ in  Theorem \ref{thm-c} with
$p,q\in\mathbb{N}_0$ and using \eqref{saalschutz-d}, we achieve the
summation formula involving harmonic numbers of 2-order.

\begin{corl} \label{corl-c}
Let $p$ and $q$ be both nonnegative integers satisfying $p\geq q\geq
n$. Then
 \bnm
&&\xxqdn\sum_{k=0}^n(-1)^k\binm{n}{k}\frac{\binm{q+k}{k}}{\binm{q-n+k}{k}}\frac{q}{q+k}H_{p+k}^{\langle2\rangle}\\
&&\xxqdn\:\,=\frac{(-1)^n}{n}\frac{\binm{p-q+n}{n}}{\binm{p+n}{n}\binm{q}{n}}\Big\{H_{p-q+n}-H_{p+n}-H_{p-q}+H_p\Big\}.
 \enm
\end{corl}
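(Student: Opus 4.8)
The plan is to specialize Theorem~\ref{thm-c} to $x=p$, $y=q$ with $p,q\in\mathbb{N}_0$ and then convert every generalized harmonic number into a combination of classical harmonic numbers. Throughout I assume $n\geq1$, which is forced by the factor $\frac1n$ on the right-hand side. First I would record the elementary shift identities
\[H_k^{\langle2\rangle}(p)=H_{p+k}^{\langle2\rangle}-H_p^{\langle2\rangle},\qquad H_n(p)=H_{p+n}-H_p,\qquad H_n(p-q)=H_{p-q+n}-H_{p-q},\]
each of which follows at once by reindexing the defining sum (replacing $p+i$ by $j$). The hypotheses $p\geq q\geq n$ guarantee that $p-q\geq0$, so these classical harmonic numbers are all legitimate, and that $\binm{q}{n}\neq0$, so the right-hand side is well defined.

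Next I would insert the first shift identity into the left-hand side of Theorem~\ref{thm-c}. This splits the sum into the target sum
\[\sum_{k=0}^n(-1)^k\binm{n}{k}\frac{\binm{q+k}{k}}{\binm{q-n+k}{k}}\frac{q}{q+k}H_{p+k}^{\langle2\rangle}\]
minus $H_p^{\langle2\rangle}$ times the companion sum
\[S=\sum_{k=0}^n(-1)^k\binm{n}{k}\frac{\binm{q+k}{k}}{\binm{q-n+k}{k}}\frac{q}{q+k}.\]
The crucial observation, and the step I expect to carry the weight of the argument, is that $S=0$. Indeed $S$ is exactly the left-hand side of \eqref{saalschutz-d} after the substitution $z\to q-n$, which collapses $\binm{z+k}{k}$ to $\binm{q-n+k}{k}$ and $\binm{x+y-z-n+k}{k}$ to $\binm{p+k}{k}$ (cancelling against $\binm{x+k}{k}=\binm{p+k}{k}$). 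Hence by \eqref{saalschutz-d} it equals
\[\frac{\binm{q-p-1}{n}\binm{0}{n}}{\binm{-p-1}{n}\binm{q}{n}},\]
and the factor $\binm{0}{n}$ vanishes for $n\geq1$ while the denominator stays nonzero. Therefore the unwanted term $H_p^{\langle2\rangle}\,S$ disappears entirely.

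Finally I would treat the right-hand side of Theorem~\ref{thm-c}: substituting $x=p$, $y=q$ and applying the two first-order shift identities turns $H_n(x-y)-H_n(x)$ into $H_{p-q+n}-H_{p-q}-H_{p+n}+H_p$, which is precisely the bracket $H_{p-q+n}-H_{p+n}-H_{p-q}+H_p$ appearing in the claim. Combining this with the reduced left-hand side yields the stated formula.

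The only genuine subtlety is the vanishing of $S$: one must check that every denominator $\binm{q-n+k}{k}$ with $0\leq k\leq n$ stays nonzero after the specialization, which is exactly where $q\geq n$ is needed, and that the evaluation of \eqref{saalschutz-d} at $z=q-n$ is an honest substitution rather than an indeterminate limit. Both are immediate once $p\geq q\geq n$ is imposed, so no delicate L'H\^{o}spital argument (of the kind used inside the proofs of the theorems) is required at the corollary stage.
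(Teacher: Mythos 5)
Your argument is correct and is essentially the paper's own derivation made explicit: the paper's one-line proof ("fixing $x=p$, $y=q$ in Theorem~\ref{thm-c} ... and using \eqref{saalschutz-d}") means exactly what you carried out, namely shifting $H_k^{\langle2\rangle}(p)=H_{p+k}^{\langle2\rangle}-H_p^{\langle2\rangle}$ and invoking \eqref{saalschutz-d} at $z=q-n$ to see that the companion sum vanishes via the factor $\binm{0}{n}$. Your checks on the hypotheses $p\geq q\geq n$ and $n\geq1$ are the right ones, so there is nothing to add.
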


\begin{thm} \label{thm-d}
Let $x$ and $y$ be both complex numbers. Then
 \bnm
\quad\sum_{k=0}^n(-1)^k\binm{n}{k}\frac{\binm{y+k}{k}}{\binm{y-n+k}{k}}
\frac{y}{y+k}H_{k}(x)
=\frac{(-1)^n}{n}\frac{1}{\binm{y}{n}}\bigg\{1-\frac{\binm{x-y+n}{n}}{\binm{x+n}{n}}\bigg\}.
 \enm
\end{thm}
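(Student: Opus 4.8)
The plan is to obtain Theorem~\ref{thm-d} directly from the already-established Theorem~\ref{thm-c} by applying the integral operator $\mathcal{I}_x$, exploiting that the two left-hand sides share the same $x$-free weight $T_k:=(-1)^k\binom{n}{k}\frac{\binom{y+k}{k}}{\binom{y-n+k}{k}}\frac{y}{y+k}$ and differ only in carrying $H_k^{\langle2\rangle}(x)$ versus $H_k(x)$. The bridge is the termwise antiderivative $\mathcal{I}_xH_k^{\langle2\rangle}(x)=\int_0^x\sum_{i=1}^k(t+i)^{-2}\,dt=\sum_{i=1}^k\bigl(\tfrac1i-\tfrac1{x+i}\bigr)=H_k-H_k(x)$, which is exactly the $\ell=1$ instance of $\mathcal{D}_xH_k^{\langle\ell\rangle}(x)=-\ell H_k^{\langle\ell+1\rangle}(x)$ read backwards. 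Applying $\mathcal{I}_x$ to Theorem~\ref{thm-c} thus converts its left-hand side into $\sum_{k=0}^nT_k\{H_k-H_k(x)\}$.

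For the right-hand side I would note that it is an exact $x$-derivative. By Lemma~\ref{lemm-a} one has $\mathcal{D}_x\binom{x-y+n}{n}=\binom{x-y+n}{n}H_n(x-y)$ and $\mathcal{D}_x\binom{x+n}{n}=\binom{x+n}{n}H_n(x)$, so with $G(x):=\frac{(-1)^n}{n\binom{y}{n}}\frac{\binom{x-y+n}{n}}{\binom{x+n}{n}}$ we get $\mathcal{D}_xG(x)=\frac{(-1)^n}{n}\frac{\binom{x-y+n}{n}}{\binom{x+n}{n}\binom{y}{n}}\{H_n(x-y)-H_n(x)\}$, i.e.\ precisely the right-hand side of Theorem~\ref{thm-c}. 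Hence $\mathcal{I}_x$ of that side is $G(x)-G(0)$, and Theorem~\ref{thm-c} integrates to
\[\sum_{k=0}^nT_kH_k(x)=\sum_{k=0}^nT_kH_k-G(x)+G(0).\]

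It then remains to identify the $x$-free quantity $C:=\sum_{k=0}^nT_kH_k+G(0)$. The substitution $x=0$ is useless here (it only returns $0=0$), so I would instead let $x\to\infty$: each $H_k(x)=\sum_{i=1}^k(x+i)^{-1}\to0$ while $\frac{\binom{x-y+n}{n}}{\binom{x+n}{n}}=\frac{(x-y+1)_n}{(x+1)_n}\to1$, whence $G(x)\to\frac{(-1)^n}{n\binom{y}{n}}$ and the displayed identity forces $C=\frac{(-1)^n}{n\binom{y}{n}}$. Substituting this back collapses everything to $\sum_{k=0}^nT_kH_k(x)=\frac{(-1)^n}{n\binom{y}{n}}-G(x)$, which is the asserted closed form $\frac{(-1)^n}{n}\frac{1}{\binom{y}{n}}\bigl\{1-\frac{\binom{x-y+n}{n}}{\binom{x+n}{n}}\bigr\}$.

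I expect the genuine difficulty to be the constant of integration, not the integration itself: recognizing Theorem~\ref{thm-c}'s right side as $\mathcal{D}_xG$ is immediate from Lemma~\ref{lemm-a}, but integrating from $0$ reintroduces the auxiliary sum $\sum_kT_kH_k$, which has no obvious evaluation in isolation. The $x\to\infty$ limit is what dissolves this, trading the unknown sum for the trivial degeneration of both the harmonic sum and the binomial quotient; since both sides of the intermediate identity are rational in $x$ of equal degree, passing to the limit is legitimate and the identity then holds as a meromorphic identity in $x,y$. As a sanity check I would confirm $n=1$, where the left side is $-\frac{1}{x+1}$ and the right side is $-\frac1y\bigl(1-\frac{x-y+1}{x+1}\bigr)=-\frac1{x+1}$.
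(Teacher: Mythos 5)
Your proposal is correct and follows essentially the same route as the paper: apply $\mathcal{I}_x$ to Theorem \ref{thm-c} (recognizing its right-hand side as $\mathcal{D}_xG$ and using $\mathcal{I}_xH_k^{\langle2\rangle}(x)=H_k-H_k(x)$), then take $x\to\infty$ to pin down the $x$-free sum $\sum_kT_kH_k$, and subtract. The only difference is presentational — you frame the limit as determining a constant of integration, whereas the paper phrases it as taking the difference of two displayed identities — but the computation is identical.
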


\begin{proof}
Applying the integral operator $\mathcal{I}_x$ to both sides of
Theorem \ref{thm-c}, we attain
  \bmn\label{harmonic-b}
&&\xxqdn\sum_{k=0}^n(-1)^k\binm{n}{k}\frac{\binm{y+k}{k}}{\binm{y-n+k}{k}}
\frac{y}{y+k}\Big\{H_k-H_{k}(x)\Big\}
 \nnm\\\nnm
&&\xxqdn\:\,=\frac{(-1)^n}{n}\frac{\binm{x-y+n}{n}}{\binm{x+n}{n}\binm{y}{n}}\bigg|_0^x\\
&&\xxqdn\:\,=\frac{(-1)^n}{n}\frac{\binm{x-y+n}{n}}{\binm{x+n}{n}\binm{y}{n}}-
\frac{(-1)^n}{n}\frac{\binm{-y+n}{n}}{\binm{y}{n}}.
 \emn
Take the limit $x\to\infty$ on both sides of \eqref{harmonic-b} to
deduce
 \bnm
\sum_{k=0}^n(-1)^k\binm{n}{k}\frac{\binm{y+k}{k}}{\binm{y-n+k}{k}}
\frac{y}{y+k}H_k
 =\frac{(-1)^n}{n}\frac{1}{\binm{y}{n}}-
\frac{(-1)^n}{n}\frac{\binm{-y+n}{n}}{\binm{y}{n}}.
 \enm
The difference of \eqref{harmonic-b} and the last equation creates
Theorem \ref{thm-d}.
\end{proof}

Setting $x=p$, $y=q$ in  Theorem \ref{thm-d} with
$p,q\in\mathbb{N}_0$ and utilizing \eqref{saalschutz-d}, we obtain
the summation formula involving  harmonic numbers.

\begin{corl} \label{corl-d}
Let $p$ and $q$ be both nonnegative integers provided that $q\geq
n$. Then
 \bnm
\:\sum_{k=0}^n(-1)^k\binm{n}{k}\frac{\binm{q+k}{k}}{\binm{q-n+k}{k}}\frac{q}{q+k}H_{p+k}
=\frac{(-1)^n}{n}\frac{1}{\binm{q}{n}}\bigg\{1-\frac{\binm{p-q+n}{n}}{\binm{p+n}{n}}\bigg\}.
 \enm
\end{corl}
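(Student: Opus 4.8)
The plan is to descend from the $2$-order identity of Theorem~\ref{thm-c} to the present $1$-order identity by integration, exploiting the companion relation $\mathcal{D}_xH_k(x)=-H_k^{\langle2\rangle}(x)$ recorded in the introduction. First I would apply the integral operator $\mathcal{I}_x$ to both sides of Theorem~\ref{thm-c}; since every coefficient $(-1)^k\binm{n}{k}\frac{\binm{y+k}{k}}{\binm{y-n+k}{k}}\frac{y}{y+k}$ is independent of $x$, the integral passes through the finite sum.

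On the left, from $\int_0^x\frac{dt}{(t+i)^2}=\frac1i-\frac1{x+i}$ one gets $\mathcal{I}_xH_k^{\langle2\rangle}(x)=H_k-H_k(x)$, so the left-hand side becomes $\sum_{k=0}^n(\cdots)\{H_k-H_k(x)\}$. On the right, the key observation is that the whole right-hand side of Theorem~\ref{thm-c} is itself a derivative: setting $G(x)=\frac{(-1)^n}{n}\binm{x-y+n}{n}\big/\!\big(\binm{x+n}{n}\binm{y}{n}\big)$ and using the quotient form of the derivative-of-binomial rule, which follows from Lemma~\ref{lemm-a} and gives $\mathcal{D}_x\binm{x-y+n}{n}=\binm{x-y+n}{n}H_n(x-y)$ together with $\mathcal{D}_x\binm{x+n}{n}=\binm{x+n}{n}H_n(x)$, one finds $\mathcal{D}_xG(x)=G(x)\{H_n(x-y)-H_n(x)\}$, which is exactly the right-hand side of Theorem~\ref{thm-c}. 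Hence its integral over $[0,x]$ telescopes to $G(x)-G(0)$ with $G(0)=\frac{(-1)^n}{n}\binm{-y+n}{n}\big/\binm{y}{n}$. This yields the intermediate identity \eqref{harmonic-b}.

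To isolate the target sum $\sum(\cdots)H_k(x)$ from the spurious $\sum(\cdots)H_k$ term, I would then let $x\to\infty$: each $H_k(x)=\sum_{i=1}^k\frac1{x+i}\to0$ and $\binm{x-y+n}{n}\big/\binm{x+n}{n}=\prod_{i=1}^n\frac{x-y+i}{x+i}\to1$, so \eqref{harmonic-b} collapses to a closed form for $\sum(\cdots)H_k$ by itself. Subtracting this limiting identity from \eqref{harmonic-b} cancels both the classical harmonic contributions and the $G(0)$ boundary term, and multiplying through by $-1$ leaves exactly the asserted formula. The only delicate points are recognizing that the right-hand side of Theorem~\ref{thm-c} integrates as a single derivative (so no separate antiderivative of the harmonic factor is needed) and evaluating the $x\to\infty$ limit; both are routine once the derivative-of-binomial rule is available, so I anticipate the main care to lie in the bookkeeping of the boundary value at $x=0$ rather than in any conceptual obstacle.
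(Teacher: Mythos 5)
Your argument correctly reconstructs the paper's own proof of Theorem~\ref{thm-d}: the paper likewise applies $\mathcal{I}_x$ to Theorem~\ref{thm-c}, recognizes the right-hand side as the derivative of $G(x)=\frac{(-1)^n}{n}\binom{x-y+n}{n}\big/\big(\binom{x+n}{n}\binom{y}{n}\big)$ so that the integral evaluates to $G(x)-G(0)$ (this is \eqref{harmonic-b}), lets $x\to\infty$ to isolate the pure $H_k$ sum, and subtracts. All of that is sound, including the boundary value $G(0)=\frac{(-1)^n}{n}\binom{-y+n}{n}\big/\binom{y}{n}$ and the limit $\binom{x-y+n}{n}\big/\binom{x+n}{n}\to1$.

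However, the statement to be proved is Corollary~\ref{corl-d}, whose summand carries $H_{p+k}$, not $H_k(x)$, and your closing claim that the subtraction ``leaves exactly the asserted formula'' is not quite accurate: what your computation yields upon setting $x=p$, $y=q$ is the evaluation of $\sum_{k=0}^n(-1)^k\binom{n}{k}\frac{\binom{q+k}{k}}{\binom{q-n+k}{k}}\frac{q}{q+k}\,H_k(p)$, and $H_k(p)=H_{p+k}-H_p$. To reach the corollary you must add $H_p$ times the companion sum $\sum_{k=0}^n(-1)^k\binom{n}{k}\frac{\binom{q+k}{k}}{\binom{q-n+k}{k}}\frac{q}{q+k}$ to both sides, so you need the additional fact that this plain sum vanishes for $n\geq1$. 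The paper supplies this from \eqref{saalschutz-d}: specializing $z=x$ there makes the binomial ratio in $x$ collapse to the displayed sum while the right-hand side acquires the factor $\binom{n-1}{n}=0$. This is a small but genuinely missing step; without it your derivation establishes Theorem~\ref{thm-d} but not the stated corollary.
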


Applying the derivative operator $\mathcal{D}_x$ to both sides of
Theorem \ref{thm-d}, we get the summation formula involving
generalized harmonic numbers of 3-order.

\begin{thm} \label{thm-e}
Let $x$ and $y$ be both complex numbers. Then
 \bnm
&&\xqdn\sum_{k=0}^n(-1)^k\binm{n}{k}\frac{\binm{y+k}{k}}{\binm{y-n+k}{k}}
\frac{y}{y+k}H_{k}^{\langle3\rangle}(x)=\frac{(-1)^n}{2n}\frac{\binm{x-y+n}{n}}{\binm{x+n}{n}\binm{y}{n}}\\
&&\xqdn\:\,\times\:
\Big\{\big[H_{n}^{\langle2\rangle}(x-y)-H_{n}^{\langle2\rangle}(x)\big]-\big[H_{n}(x-y)-H_{n}(x)\big]^2\Big\}.
 \enm
\end{thm}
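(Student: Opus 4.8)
The plan is to differentiate the order-two identity in the variable $x$. Concretely, I would apply $\mathcal{D}_x$ to both sides of Theorem~\ref{thm-c} (equivalently, differentiate Theorem~\ref{thm-d} twice, the first derivative reproducing Theorem~\ref{thm-c}). Write $S$ for the sum on the left of Theorem~\ref{thm-e}, the quantity to be evaluated. On the summation side of Theorem~\ref{thm-c} the factors $\binm{n}{k}$, $\binm{y+k}{k}/\binm{y-n+k}{k}$ and $y/(y+k)$ carry no $x$, so $\mathcal{D}_x$ reaches only the inner harmonic number; the relation $\mathcal{D}_xH_n^{\langle\ell\rangle}(x)=-\ell H_n^{\langle\ell+1\rangle}(x)$ recorded in the introduction then gives $\mathcal{D}_xH_k^{\langle2\rangle}(x)=-2H_k^{\langle3\rangle}(x)$. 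Hence the differentiated left-hand side is exactly $-2S$.

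For the right-hand side, abbreviate $A(x)=\binm{x-y+n}{n}/\binm{x+n}{n}$ and $B(x)=H_n(x-y)-H_n(x)$, so that the right side of Theorem~\ref{thm-c} is $\frac{(-1)^n}{n\binm{y}{n}}A(x)B(x)$. The main computation is $\mathcal{D}_xA(x)=A(x)B(x)$. This follows from the binomial derivative rule $\mathcal{D}_x\binm{x+r}{s}=\binm{x+r}{s}\{H_r(x)-H_{r-s}(x)\}$ (equivalently, by logarithmic differentiation): taking $r=s=n$ and using $H_0=0$ gives $\mathcal{D}_x\binm{x+n}{n}=\binm{x+n}{n}H_n(x)$, and the same rule in the shifted variable $w=x-y$ gives $\mathcal{D}_x\binm{x-y+n}{n}=\binm{x-y+n}{n}H_n(x-y)$; the quotient rule then collapses to $A(x)\{H_n(x-y)-H_n(x)\}=A(x)B(x)$. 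The remaining derivative is $\mathcal{D}_xB(x)=-\big[H_n^{\langle2\rangle}(x-y)-H_n^{\langle2\rangle}(x)\big]$, by the chain rule together with $\mathcal{D}_xH_n(x)=-H_n^{\langle2\rangle}(x)$.

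Feeding these into the product rule gives $\mathcal{D}_x\{A(x)B(x)\}=A(x)\big\{B(x)^2-[H_n^{\langle2\rangle}(x-y)-H_n^{\langle2\rangle}(x)]\big\}$, so the differentiated right-hand side equals $\frac{(-1)^n}{n\binm{y}{n}}A(x)\big\{[H_n(x-y)-H_n(x)]^2-[H_n^{\langle2\rangle}(x-y)-H_n^{\langle2\rangle}(x)]\big\}$. Equating this with $-2S$ and dividing by $-2$ --- which supplies both the factor $1/2$ and the reversal of the two terms inside the braces --- reproduces precisely the identity asserted in Theorem~\ref{thm-e}.

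I do not expect a genuine obstacle here: the one point that must be handled correctly is the logarithmic-derivative identity $A'(x)/A(x)=B(x)$, which is what makes the product rule telescope --- the factor $B(x)$ already present in Theorem~\ref{thm-c} reappears as $A'/A$ and produces the squared term, while $\mathcal{D}_xB(x)$ produces the order-two difference. Beyond that, the only care needed is sign bookkeeping, namely the $-2$ coming from $\mathcal{D}_xH^{\langle2\rangle}=-2H^{\langle3\rangle}$ and the minus sign in $\mathcal{D}_xB(x)$.
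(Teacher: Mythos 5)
Your proposal is correct and is essentially the paper's own argument: the paper derives Theorem~\ref{thm-e} by a single application of $\mathcal{D}_x$ to the order-two identity (its text cites Theorem~\ref{thm-d}, evidently a slip for Theorem~\ref{thm-c}, as your factor $-2$ from $\mathcal{D}_xH_k^{\langle2\rangle}(x)=-2H_k^{\langle3\rangle}(x)$ and the resulting $\frac{1}{2n}$ confirm). Your logarithmic-derivative computation $A'(x)=A(x)\{H_n(x-y)-H_n(x)\}$ and the sign bookkeeping are exactly the details the paper leaves implicit, and they check out.
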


Choosing $x=p$, $y=q$ in  Theorem \ref{thm-e} with
$p,q\in\mathbb{N}_0$ and exploiting \eqref{saalschutz-d}, we gain
the summation formula involving harmonic numbers of 3-order.

\begin{corl} \label{corl-e}
Let $p$ and $q$ be both nonnegative integers satisfying $p\geq q\geq
n$. Then
 \bnm
&&\xxqdn\sum_{k=0}^n(-1)^k\binm{n}{k}\frac{\binm{q+k}{k}}{\binm{q-n+k}{k}}\frac{q}{q+k}H_{p+k}^{\langle3\rangle}
=\frac{(-1)^n}{2n}\frac{\binm{p-q+n}{n}}{\binm{p+n}{n}\binm{q}{n}}
\\&&\xxqdn\:\,\times\:
\Big\{\big[H_{p-q+n}^{\langle2\rangle}-H_{p+n}^{\langle2\rangle}-H_{p-q}^{\langle2\rangle}+H_p^{\langle2\rangle}\big]
  -\big[H_{p-q+n}-H_{p+n}-H_{p-q}+H_p\big]^2\Big\}.
 \enm
\end{corl}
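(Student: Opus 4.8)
The plan is to apply the derivative operator $\mathcal{D}_x$ to both sides of the second-order summation formula of Theorem \ref{thm-c}, which raises the order of the generalized harmonic numbers from $2$ to $3$ and should therefore deliver Theorem \ref{thm-e} directly. On the left-hand side the prefactor $(-1)^k\binm{n}{k}\binm{y+k}{k}/\binm{y-n+k}{k}\cdot y/(y+k)$ is free of $x$, so it passes through $\mathcal{D}_x$ untouched; invoking the relation $\mathcal{D}_xH_{n}^{\langle\ell\rangle}(x)=-\ell H_{n}^{\langle\ell+1\rangle}(x)$ with $\ell=2$ converts $\mathcal{D}_xH_k^{\langle2\rangle}(x)$ into $-2H_k^{\langle3\rangle}(x)$. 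Hence the differentiated left-hand side is exactly $-2$ times the sum appearing in Theorem \ref{thm-e}, and the whole problem reduces to differentiating the right-hand side and dividing by $-2$.

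For the right-hand side I would write it as $\frac{(-1)^n}{n\binm{y}{n}}F(x)G(x)$, where $F(x)=\binm{x-y+n}{n}/\binm{x+n}{n}$ and $G(x)=H_{n}(x-y)-H_{n}(x)$, noting that $\binm{y}{n}$ is constant in $x$. Two derivative rules are needed. First, the formula $\mathcal{D}_x\binm{x+r}{s}=\binm{x+r}{s}\{H_r(x)-H_{r-s}(x)\}$ applied to $\binm{x-y+n}{n}$ and to $\binm{x+n}{n}$ (each with $r=s=n$, so the $H_{r-s}=H_0=0$ terms drop out) gives the logarithmic derivative $F'(x)/F(x)=H_n(x-y)-H_n(x)=G(x)$, that is $F'(x)=F(x)G(x)$. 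Second, $\mathcal{D}_xH_n(x)=-H_n^{\langle2\rangle}(x)$ yields $G'(x)=-\{H_n^{\langle2\rangle}(x-y)-H_n^{\langle2\rangle}(x)\}$.

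Putting these into the product rule gives $\mathcal{D}_x[F(x)G(x)]=F'(x)G(x)+F(x)G'(x)=F(x)\big\{G(x)^2-[H_n^{\langle2\rangle}(x-y)-H_n^{\langle2\rangle}(x)]\big\}$. Equating the two differentiated sides, dividing by $-2$, and rearranging the braces (which flips the sign between the $H^{\langle2\rangle}$-difference and the squared $H$-difference) should reproduce the claimed closed form of Theorem \ref{thm-e} verbatim. I do not expect any genuine obstacle here: the one step requiring care is the bookkeeping of the product rule on the right, in particular recognizing that $F'/F=G$ forces the cross term $F'G$ to become $FG^2$, which is precisely the source of the squared bracket $[H_n(x-y)-H_n(x)]^2$; tracking the sign correctly through the division by $-2$ is the only place where an error could plausibly creep in.
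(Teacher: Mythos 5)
Your calculation is a correct derivation of Theorem \ref{thm-e}: differentiating Theorem \ref{thm-c} in $x$, using $\mathcal{D}_xH_k^{\langle2\rangle}(x)=-2H_k^{\langle3\rangle}(x)$ on the left and $F'/F=H_n(x-y)-H_n(x)$ together with $\mathcal{D}_xH_n(x)=-H_n^{\langle2\rangle}(x)$ on the right, then dividing by $-2$, does give that theorem, and this is essentially the paper's own route to it. But the statement you were asked to prove is Corollary \ref{corl-e}, and your proposal stops one step short: it never passes from the theorem (complex $x,y$, summand $H_k^{\langle3\rangle}(x)$) to the corollary (integers $p\geq q\geq n$, summand $H_{p+k}^{\langle3\rangle}$).

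That passage is not a mere relabelling. Setting $x=p$, $y=q$ one has $H_k^{\langle3\rangle}(p)=H_{p+k}^{\langle3\rangle}-H_p^{\langle3\rangle}$, so the left-hand side of Theorem \ref{thm-e} differs from the left-hand side of Corollary \ref{corl-e} by the extra term
$-H_p^{\langle3\rangle}\sum_{k=0}^n(-1)^k\binom{n}{k}\frac{\binom{q+k}{k}}{\binom{q-n+k}{k}}\frac{q}{q+k}$,
and one must show this sum vanishes. This is where the paper's phrase ``exploiting \eqref{saalschutz-d}'' carries real weight: specializing \eqref{saalschutz-d} with $x=z$ and $y=q$ reduces its left side to exactly this sum, while its right side acquires the factor $\binom{n-1}{n}=0$ (the denominator factor $\binom{n-1-q}{n}=(-1)^n\binom{q}{n}$ being nonzero precisely because $q\geq n$), so the sum is $0$. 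Only after this cancellation, together with the rewritings $H_n^{\langle j\rangle}(p)=H_{p+n}^{\langle j\rangle}-H_p^{\langle j\rangle}$ and $H_n^{\langle j\rangle}(p-q)=H_{p-q+n}^{\langle j\rangle}-H_{p-q}^{\langle j\rangle}$ on the right-hand side, does the stated identity follow. You need to add this specialization-and-vanishing argument to have a proof of the corollary.
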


Applying the derivative operator $\mathcal{D}_x$ to both sides of
Theorem \ref{thm-e}, we achieve the summation formula involving
generalized harmonic numbers of 4-order.

\begin{thm} \label{thm-f}
Let $x$ and $y$ be both complex numbers. Then
 \bnm
&&\xqdn\sum_{k=0}^n(-1)^k\binm{n}{k}\frac{\binm{y+k}{k}}{\binm{y-n+k}{k}}
\frac{y}{y+k}H_{k}^{\langle4\rangle}(x)=\frac{(-1)^n}{6n}\frac{\binm{x-y+n}{n}}{\binm{x+n}{n}\binm{y}{n}}\\
&&\xqdn\:\,\times\:
\Big\{\big[H_{n}(x-y)-H_{n}(x)\big]^3+2\big[H_{n}^{\langle3\rangle}(x-y)-H_{n}^{\langle3\rangle}(x)\big]\\
&&\quad-3\big[H_{n}(x-y)-H_{n}(x)\big]\big[H_{n}^{\langle2\rangle}(x-y)-H_{n}^{\langle2\rangle}(x)\big]\:\Big\}.
 \enm
\end{thm}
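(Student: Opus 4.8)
The plan is to differentiate both sides of Theorem~\ref{thm-e} with respect to $x$ through the operator $\mathcal{D}_x$, continuing the ladder of order-raising that produced Theorems~\ref{thm-c} and~\ref{thm-e}. On the left-hand side the variable $x$ occurs only inside $H_{k}^{\langle3\rangle}(x)$, so the rule $\mathcal{D}_xH_{k}^{\langle\ell\rangle}(x)=-\ell H_{k}^{\langle\ell+1\rangle}(x)$ from the introduction replaces each summand's harmonic factor by $-3H_{k}^{\langle4\rangle}(x)$. Thus $\mathcal{D}_x$ sends the left side of Theorem~\ref{thm-e} to $-3$ times the sum we want, and this factor $-3$ is what converts the prefactor $\tfrac{(-1)^n}{2n}$ into $\tfrac{(-1)^n}{6n}$ once we divide through at the end.

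The real computation is on the right-hand side. I would set $F(x)=\binom{x-y+n}{n}/\binom{x+n}{n}$ and abbreviate the harmonic-number differences $A=H_n(x-y)-H_n(x)$, $B=H_n^{\langle2\rangle}(x-y)-H_n^{\langle2\rangle}(x)$, $C=H_n^{\langle3\rangle}(x-y)-H_n^{\langle3\rangle}(x)$, so that the bracket in Theorem~\ref{thm-e} is $B-A^2$. Because $\binom{x+n}{n}=(x+1)_n/n!$ and $\binom{x-y+n}{n}=(x-y+1)_n/n!$, their logarithmic derivatives are $H_n(x)$ and $H_n(x-y)$, whence $\mathcal{D}_xF(x)=F(x)A$; the same order-raising rule, together with $\tfrac{d}{dx}(x-y)=1$, gives $\mathcal{D}_xA=-B$ and $\mathcal{D}_xB=-2C$.

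Assembling these via the product rule yields $\mathcal{D}_x\{F(B-A^2)\}=F\{A(B-A^2)\}+F\{-2C+2AB\}=F\{3AB-A^3-2C\}$. Equating this (carrying the constant $\tfrac{(-1)^n}{2n\binom{y}{n}}$) with the differentiated left side and dividing by $-3$ leaves $\tfrac{(-1)^n}{6n}F(x)$ times $A^3+2C-3AB$, which upon re-expanding $A$, $B$, $C$ is precisely the cubic combination claimed in Theorem~\ref{thm-f}. I expect the only genuine hazard to be sign bookkeeping in this last stage—the contribution of $-2AA'=2AB$ from differentiating $-A^2$, and the overall division by $-3$ that flips $3AB-A^3-2C$ into $A^3+2C-3AB$. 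There is no conceptual obstacle, since every derivative needed is governed by the two elementary rules already in hand.
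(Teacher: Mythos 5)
Your proposal is correct and is exactly the paper's route: the author obtains Theorem~\ref{thm-f} by applying $\mathcal{D}_x$ to Theorem~\ref{thm-e}, stating this in one line without details. Your computation of $\mathcal{D}_x\{F(B-A^2)\}=F(3AB-A^3-2C)$ and the division by $-3$ correctly fills in the omitted bookkeeping and reproduces the stated right-hand side.
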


Selecting $x=p$, $y=q$ in  Theorem \ref{thm-f} with
$p,q\in\mathbb{N}_0$ and availing \eqref{saalschutz-d}, we attain
the summation formula involving harmonic numbers of 4-order.

\begin{corl} \label{corl-f}
Let $p$ and $q$ be both nonnegative integers provided that $p\geq
q\geq n$. Then
 \bnm
&&\xxqdn\qdn\sum_{k=0}^n(-1)^k\binm{n}{k}\frac{\binm{q+k}{k}}{\binm{q-n+k}{k}}\frac{q}{q+k}H_{p+k}^{\langle4\rangle}
=\frac{(-1)^n}{6n}\frac{\binm{p-q+n}{n}}{\binm{p+n}{n}\binm{q}{n}}
\\&&\xxqdn\qdn\:\,\times\:
\Big\{\big[H_{p-q+n}-H_{p+n}-H_{p-q}+H_p\big]^3+
2\big[H_{p-q+n}^{\langle3\rangle}-H_{p+n}^{\langle3\rangle}-H_{p-q}^{\langle3\rangle}+H_p^{\langle3\rangle}\big]\\
&&\qdn-3\big[H_{p-q+n}-H_{p+n}-H_{p-q}+H_p\big]
  \big[H_{p-q+n}^{\langle2\rangle}-H_{p+n}^{\langle2\rangle}-H_{p-q}^{\langle2\rangle}+H_p^{\langle2\rangle}\big]\Big\}.
 \enm
\end{corl}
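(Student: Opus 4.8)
The plan is to obtain Corollary~\ref{corl-f} as the specialization $x=p$, $y=q$ of Theorem~\ref{thm-f}, where $p,q\in\mathbb{N}_0$ with $p\geq q\geq n$. The hypothesis $q\geq n$ keeps every factor in the summand meaningful: it forces $\binm{q-n+k}{k}\neq0$ for $0\leq k\leq n$ and $\binm{q}{n}\neq0$, so that the substitution into Theorem~\ref{thm-f} is a genuine evaluation rather than a limit; the hypothesis $p\geq q$ guarantees $p-q\geq0$, so that the classical generalized harmonic numbers $H_{p-q}^{\langle\ell\rangle}$ on the right-hand side are legitimate.

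The right-hand side is immediate. For any $m\in\mathbb{N}_0$ and any positive integer $\ell$ one has the reindexing identity
\[H_n^{\langle\ell\rangle}(m)=\sum_{k=1}^n\frac{1}{(m+k)^\ell}=H_{m+n}^{\langle\ell\rangle}-H_m^{\langle\ell\rangle}.\]
Taking $m=p$ and $m=p-q$ and subtracting, each difference appearing in Theorem~\ref{thm-f} telescopes to
\[H_n^{\langle\ell\rangle}(p-q)-H_n^{\langle\ell\rangle}(p)=H_{p-q+n}^{\langle\ell\rangle}-H_{p+n}^{\langle\ell\rangle}-H_{p-q}^{\langle\ell\rangle}+H_p^{\langle\ell\rangle}\]
for $\ell=1,2,3$. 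Together with the substitution of the prefactor $\binm{x-y+n}{n}/\{\binm{x+n}{n}\binm{y}{n}\}$, this turns the right-hand side of Theorem~\ref{thm-f} into precisely the right-hand side of the corollary.

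The left-hand side requires one extra ingredient. Substituting $x=p$ and using $H_k^{\langle4\rangle}(p)=H_{p+k}^{\langle4\rangle}-H_p^{\langle4\rangle}$, the specialized left-hand side of Theorem~\ref{thm-f} splits as
\[\sum_{k=0}^n(-1)^k\binm{n}{k}\frac{\binm{q+k}{k}}{\binm{q-n+k}{k}}\frac{q}{q+k}H_{p+k}^{\langle4\rangle}-H_p^{\langle4\rangle}\sum_{k=0}^n(-1)^k\binm{n}{k}\frac{\binm{q+k}{k}}{\binm{q-n+k}{k}}\frac{q}{q+k}.\]
The first sum is exactly the left-hand side of the corollary, so it remains to show the second, harmonic-number-free sum vanishes; this is the step I expect to be the crux, and it is precisely where \eqref{saalschutz-d} enters. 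Setting $z\to y-n$ in \eqref{saalschutz-d} collapses its summand to $\frac{\binm{y+k}{k}}{\binm{y-n+k}{k}}\frac{y}{y+k}$ (since $\binm{z+k}{k}\to\binm{y-n+k}{k}$ while $\binm{x+y-z-n+k}{k}\to\binm{x+k}{k}$ cancels the $\binm{x+k}{k}$ of the numerator), whereas on the right the factor $\binm{z-y+n}{n}$ degenerates to $\binm{0}{n}=0$ for $n\geq1$, the remaining factors staying finite and nonzero because $q\geq n$. Hence
\[\sum_{k=0}^n(-1)^k\binm{n}{k}\frac{\binm{q+k}{k}}{\binm{q-n+k}{k}}\frac{q}{q+k}=0,\]
the $H_p^{\langle4\rangle}$ contribution drops out, and the specialized left-hand side equals that of the corollary verbatim. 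Combining the two computations completes the proof.
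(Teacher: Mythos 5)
Your proof is correct and follows the paper's own route: the paper likewise obtains Corollary \ref{corl-f} by setting $x=p$, $y=q$ in Theorem \ref{thm-f} and invoking \eqref{saalschutz-d} at $z=y-n$ to dispose of the companion sum created by the shift $H_k^{\langle4\rangle}(p)=H_{p+k}^{\langle4\rangle}-H_p^{\langle4\rangle}$. Your write-up simply makes explicit the vanishing of $\sum_{k=0}^n(-1)^k\binm{n}{k}\binm{q+k}{k}\binm{q-n+k}{k}^{-1}\frac{q}{q+k}$, which the paper leaves as a one-line remark.
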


\begin{thm} \label{thm-g}
Let $x$ and $y$ be both complex numbers. Then
 \bnm
&&\xqdn\sum_{k=0}^n(-1)^k\binm{n}{k}\frac{\binm{y+k}{k}}{\binm{y-n+k}{k}}
\frac{(y-1)y}{(y+k-1)(y+k)}H_{k}^{\langle2\rangle}(x)\\
&&\xqdn\:\,=\frac{(-1)^n(1+x-y+ny)}{n(n-1)(1+x-y)}\frac{\binm{x-y+n}{n}}{\binm{x+n}{n}\binm{y}{n}}\\
 &&\xqdn\:\,\times\:\bigg\{H_{n}(x)-H_{n}(x-y)+\frac{ny}{(1+x-y)(1+x-y+ny)}\bigg\}.
 \enm
\end{thm}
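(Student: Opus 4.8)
The plan is to replay the proof of Theorem \ref{thm-c}, but to start from the companion evaluation \eqref{saalschutz-b} of Saalsch\"utz's theorem rather than from \eqref{saalschutz}, so that the summand acquires the factor $\frac{(y-1)y}{(y+k-1)(y+k)}$. Concretely, I would substitute $a\to 1+x$, $b\to y-1$, $c\to z$ in \eqref{saalschutz-b}. Since $\binm{y+k}{k}\frac{(y-1)y}{(y+k-1)(y+k)}=(y-1)_k/(1)_k$ is exactly the Pochhammer symbol $(b)_k$ for $b=y-1$, this reproduces the left-hand side of \eqref{saalschutz-d} with $\frac{y}{y+k}$ replaced by $\frac{(y-1)y}{(y+k-1)(y+k)}$, and yields the $t=2$ companion of \eqref{saalschutz-d}:
\[\sum_{k=0}^n(-1)^k\binm{n}{k}\frac{\binm{x+k}{k}\binm{y+k}{k}}{\binm{z+k}{k}\binm{x+y-z-n+k}{k}}\frac{(y-1)y}{(y+k-1)(y+k)}=\Big\{1+\tfrac{n(z-x-y)}{(z-x-1)(z-y+1)}\Big\}\frac{\binm{z-x-2+n}{n}\binm{z-y+n}{n}}{\binm{z+n}{n}\binm{z-x-y-1+n}{n}}.\]

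Next I would apply the derivative operator $\mathcal{D}_x$ to both sides. On the left only the quotient $\binm{x+k}{k}/\binm{x+y-z-n+k}{k}$ depends on $x$, so it contributes the factor $H_k(x)-H_k(x+y-z-n)$, just as in Theorem \ref{thm-c}. On the right the product rule splits the $x$-derivative into two pieces: differentiating the binomial quotient $\binm{z-x-2+n}{n}/\binm{z-x-y-1+n}{n}$ reproduces the whole right-hand side times $H_n(z-x-y-1)-H_n(z-x-2)$, while differentiating the rational prefactor contributes an additive remainder proportional to $\frac{n(1-y)}{(z-y+1)(z-x-1)^2}$ times the binomial product. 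The first piece carries the harmonic-number content, and the second will become the rational correction in the statement.

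I would then write $H_k(x)-H_k(x+y-z-n)=(y-z-n)\sum_{i=1}^k\frac{1}{(x+i)(x+y-z-n+i)}$, divide the identity through by $y-z-n$, and cancel the zero that $\binm{z-y+n}{n}$ develops at $z=y-n$ by the same device as in Theorem \ref{thm-c}, namely $\frac{\binm{z-y+n}{n}}{y-z-n}=\frac{\binm{z-y-1+n}{n}}{y-z}$. After this the right-hand side is regular at $z=y-n$, and I would pass to that limit: the inner double sum tends to $H_k^{\langle2\rangle}(x)$, the factor $1/(y-z)$ tends to $1/n$, and $\binm{z-y-1+n}{n}\to(-1)^n$. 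Using $\binm{y-x-2}{n}=(-1)^n\binm{x-y+n}{n}\frac{1+x-y+n}{1+x-y}$ and $\binm{-x-1}{n}=(-1)^n\binm{x+n}{n}$, the surviving binomials together with the value $\frac{-(1+x-y+ny)}{(1+x-y+n)(n-1)}$ of the rational prefactor at $z=y-n$ collapse into the coefficient $\frac{(-1)^{n+1}(1+x-y+ny)}{n(n-1)(1+x-y)}\frac{\binm{x-y+n}{n}}{\binm{x+n}{n}\binm{y}{n}}$.

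The delicate point, and the one I expect to be the real obstacle, is the harmonic-number reduction in the limit. Because here $b=y-1$ rather than $y$, the surviving difference is $H_n(z-x-y-1)-H_n(z-x-2)$, which at $z=y-n$ equals $H_n(x-y+1)-H_n(x)$ rather than $H_n(x-y)-H_n(x)$. I would remove the unwanted shift by one through the telescoping identity $H_n(x-y+1)=H_n(x-y)+\frac{1}{1+x-y+n}-\frac{1}{1+x-y}$, after which the harmonic piece becomes $-[H_n(x)-H_n(x-y)]+\frac{1}{1+x-y+n}-\frac{1}{1+x-y}$. Adding the bracket-derivative remainder, which relative to the prefactor equals $\frac{n(1-y)}{(1+x-y+n)(1+x-y+ny)}$, I expect a short computation to collapse the non-harmonic part to $-\frac{ny}{(1+x-y)(1+x-y+ny)}$, so that the whole braced factor becomes $-\{H_n(x)-H_n(x-y)+\frac{ny}{(1+x-y)(1+x-y+ny)}\}$. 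This extra minus sign cancels the $(-1)^{n+1}$ in the coefficient above, turning it into $(-1)^n$ and yielding exactly the statement. Tracking these two cancelling signs and the single-unit binomial shifts is the only genuinely error-prone part of the argument.
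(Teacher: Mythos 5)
Your proposal is correct and follows essentially the same route as the paper: substitute $a\to1+x$, $b\to y-1$, $c\to z$ into \eqref{saalschutz-b}, apply $\mathcal{D}_x$, rewrite the harmonic difference as the double sum divided by $y-z-n$, and let $z\to y-n$. The only difference is cosmetic bookkeeping --- you keep the right-hand side with $\binm{z-x-2+n}{n}$ and the prefactor $1+\frac{n(z-x-y)}{(z-x-1)(z-y+1)}$, whereas the paper absorbs the shift into $\binm{z-x-1+n}{n}$, so your intermediate harmonic term is $H_n(z-x-2)$ and the rational corrections are distributed differently, but your limit computations (including the collapse of the non-harmonic part to $-\frac{ny}{(1+x-y)(1+x-y+ny)}$) check out and yield the stated identity.
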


\begin{proof}
Employ the substitutions $a\to1+x$, $b\to y-1$, $c\to z$ in
\eqref{saalschutz-b} to obtain
  \bmn\label{saalschutz-e}
&&\sum_{k=0}^n(-1)^k\binm{n}{k}\frac{\binm{x+k}{k}\binm{y+k}{k}}{\binm{z+k}{k}\binm{x+y-z-n+k}{k}}\frac{(y-1)y}{(y+k-1)(y+k)}
 \nnm\\
&&\,\,=\,\frac{(z-x-1)(z-y+1)+n(z-x-y)}{(z-x-1+n)(z-y+1)}
\frac{\binm{z-x-1+n}{n}\binm{z-y+n}{n}}{\binm{z-x-y-1+n}{n}\binm{z+n}{n}}.
 \emn
Applying the derivative operator $\mathcal{D}_x$ to both sides of
\eqref{saalschutz-e}, we get
 \bnm
&&\xqdn\sum_{k=0}^n(-1)^k\binm{n}{k}\frac{\binm{x+k}{k}\binm{y+k}{k}}{\binm{z+k}{k}\binm{x+y-z-n+k}{k}}
\frac{(y-1)y}{(y+k-1)(y+k)}\Big\{H_k(x)-H_k(x+y-z-n)\Big\}\\
&&\xqdn\,\,=\,\frac{(z-x-1)(z-y+1)+n(z-x-y)}{(z-x-1+n)(z-y+1)}
\frac{\binm{z-x-1+n}{n}\binm{z-y+n}{n}}{\binm{z-x-y-1+n}{n}\binm{z+n}{n}}\\
&&\xqdn\,\,\times\:\Big\{H_n(z-x-y-1)-H_n(z-x-1)\Big\}\\
&&\xqdn\,\,-\:\frac{n(z+n)}{(z-x-1+n)^2(z-y+1)}
\frac{\binm{z-x-1+n}{n}\binm{z-y+n}{n}}{\binm{z-x-y-1+n}{n}\binm{z+n}{n}}.
 \enm
Its equivalent form can be written as
 \bnm
&&\xqdn\sum_{k=0}^n(-1)^k\binm{n}{k}\frac{\binm{x+k}{k}\binm{y+k}{k}}{\binm{z+k}{k}\binm{x+y-z-n+k}{k}}
\frac{(y-1)y}{(y+k-1)(y+k)}\sum_{i=1}^k\frac{1}{(x+i)(x+y-z-n+i)}\\
&&\xqdn\,\,=\,\frac{(z-x-1)(z-y+1)+n(z-x-y)}{(z-x-1+n)(z-y+1)(y-z)}
\frac{\binm{z-x-1+n}{n}\binm{z-y-1+n}{n}}{\binm{z-x-y-1+n}{n}\binm{z+n}{n}}\\
&&\xqdn\,\,\times\:\Big\{H_n(z-x-y-1)-H_n(z-x-1)\Big\}\\
&&\xqdn\,\,-\:\frac{n(z+n)}{(z-x-1+n)^2(z-y+1)(y-z)}
\frac{\binm{z-x-1+n}{n}\binm{z-y-1+n}{n}}{\binm{z-x-y-1+n}{n}\binm{z+n}{n}}.
 \enm
 Taking the limit $z\to y-n$ on both sides of the last equation, we gain Theorem \ref{thm-g} to
finish the proof.
\end{proof}

Fixing $x=p$, $y=q$ in  Theorem \ref{thm-g} with
$p,q\in\mathbb{N}_0$ and using \eqref{saalschutz-e}, we achieve the
summation formula involving harmonic numbers of 2-order.

\begin{corl} \label{corl-g}
Let $p$ and $q$ be both nonnegative integers satisfying $p\geq q\geq
n$. Then
 \bnm
&&\qqdn\xqdn\sum_{k=0}^n(-1)^k\binm{n}{k}\frac{\binm{q+k}{k}}{\binm{q-n+k}{k}}
\frac{(q-1)q}{(q+k-1)(q+k)}H_{p+k}^{\langle2\rangle}\\
&&\qqdn\xqdn\:\,=\frac{(-1)^n(1+p-q+nq)}{n(n-1)(1+p-q)}\frac{\binm{p-q+n}{n}}{\binm{p+n}{n}\binm{q}{n}}\\
 &&\qqdn\xqdn\:\,\times\:\bigg\{H_{p+n}-H_{p-q+n}-H_{p}+H_{p-q}+\frac{nq}{(1+p-q)(1+p-q+nq)}\bigg\}.
 \enm
\end{corl}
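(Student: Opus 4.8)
The plan is to derive Corollary \ref{corl-g} from Theorem \ref{thm-g} by the same specialization that produced the earlier corollaries: set $x=p$ and $y=q$ with $p,q\in\mathbb{N}_0$, $p\geq q\geq n$, and $n\geq2$ (the last forced by the factor $n(n-1)$). The only genuine work is to convert the generalized harmonic numbers of the theorem into the classical ones of the corollary. Since $p$ is a nonnegative integer, telescoping the defining sums yields $H_k^{\langle2\rangle}(p)=\sum_{i=1}^k(p+i)^{-2}=H_{p+k}^{\langle2\rangle}-H_p^{\langle2\rangle}$, together with $H_n(p)=H_{p+n}-H_p$ and $H_n(p-q)=H_{p-q+n}-H_{p-q}$.

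Substituting $H_k^{\langle2\rangle}(p)=H_{p+k}^{\langle2\rangle}-H_p^{\langle2\rangle}$ into the left side of Theorem \ref{thm-g} rewrites that side as the target series of Corollary \ref{corl-g} minus $H_p^{\langle2\rangle}S$, where
\[S=\sum_{k=0}^n(-1)^k\binm{n}{k}\frac{\binm{q+k}{k}}{\binm{q-n+k}{k}}\frac{(q-1)q}{(q+k-1)(q+k)}.\]
Hence, once $S=0$ is established, the target series equals the right side of Theorem \ref{thm-g} with $x=p$, $y=q$. The vanishing of $S$ is exactly where \eqref{saalschutz-e} enters: putting $x=p$, $y=q$ there and letting $z\to q-n$, the factors $\binm{z+k}{k}$ and $\binm{x+y-z-n+k}{k}$ tend to $\binm{q-n+k}{k}$ and $\binm{p+k}{k}$, so the $\binm{p+k}{k}$ in the numerator cancels and the left side reproduces precisely the summand of $S$; meanwhile the right side carries the factor $\binm{z-y+n}{n}\to\binm{0}{n}=0$ while every competing factor stays finite and nonzero. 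Thus $S=0$ and the spurious $H_p^{\langle2\rangle}$ contribution disappears.

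It remains to transcribe the right side. Under $x\mapsto p$, $y\mapsto q$ the rational prefactor and the residual term $\frac{ny}{(1+x-y)(1+x-y+ny)}$ pass over verbatim, while $H_n(x)-H_n(x-y)$ becomes $H_{p+n}-H_{p-q+n}-H_p+H_{p-q}$, reassembling the bracket of Corollary \ref{corl-g}. The main obstacle is the middle paragraph: one must check that the limit $z\to q-n$ in \eqref{saalschutz-e} is an honest zero and not an indeterminate $0/0$, i.e.\ that the denominators $z-x-1+n\to q-p-1$ and $z-y+1\to1-n$ do not vanish. The hypotheses $p\geq q$ and $n\geq2$ guarantee precisely this, and the remaining manipulations are routine.
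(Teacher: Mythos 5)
Your proposal is correct and is exactly the argument the paper intends by its one-line derivation: specialize Theorem \ref{thm-g} at $x=p$, $y=q$, shift $H_k^{\langle2\rangle}(p)$ to $H_{p+k}^{\langle2\rangle}-H_p^{\langle2\rangle}$, and invoke the $z\to q-n$ limit of \eqref{saalschutz-e} (whose right side vanishes through $\binm{z-q+n}{n}\to\binm{0}{n}=0$) to kill the companion sum multiplying $H_p^{\langle2\rangle}$. Your attention to the nonvanishing of $q-p-1$ and $1-n$ and to the implicit hypothesis $n\geq2$ is a welcome bit of care the paper leaves unstated.
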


\begin{thm} \label{thm-h}
Let $x$ and $y$ be both complex numbers. Then
 \bnm
&&\xqdn\sum_{k=0}^n(-1)^k\binm{n}{k}\frac{\binm{y+k}{k}}{\binm{y-n+k}{k}}
\frac{(y-1)y}{(y+k-1)(y+k)}H_{k}(x)\\
&&\xqdn\:\,=\frac{(-1)^n(1+x-y+ny)}{n(n-1)(1+x-y)}\frac{\binm{x-y+n}{n}}{\binm{x+n}{n}\binm{y}{n}}
 -\frac{(-1)^n}{n(n-1)}\frac{1}{\binm{y}{n}}.
 \enm
\end{thm}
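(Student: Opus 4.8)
The plan is to mirror the derivation of Theorem~\ref{thm-d} from Theorem~\ref{thm-c}, this time starting from Theorem~\ref{thm-g}, which already carries the same weight $\frac{(y-1)y}{(y+k-1)(y+k)}$ together with a second-order harmonic number. First I would apply the integral operator $\mathcal{I}_x$ to both sides of Theorem~\ref{thm-g}. Since $\mathcal{D}_xH_k(x)=-H_k^{\langle2\rangle}(x)$ and $H_k(0)=H_k$, one has $\mathcal{I}_xH_k^{\langle2\rangle}(x)=H_k-H_k(x)$, so the left-hand side becomes
\[
\sum_{k=0}^n(-1)^k\binm{n}{k}\frac{\binm{y+k}{k}}{\binm{y-n+k}{k}}\frac{(y-1)y}{(y+k-1)(y+k)}\big\{H_k-H_k(x)\big\}.
\]

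The crux is to integrate the right-hand side of Theorem~\ref{thm-g}. Writing $P(x)=\binm{x-y+n}{n}/\binm{x+n}{n}$, the derivative-of-binomial rule $\mathcal{D}_x\binm{x+r}{s}=\binm{x+r}{s}\{H_r(x)-H_{r-s}(x)\}$ (a special case of Lemma~\ref{lemm-a}) gives $\mathcal{D}_xP(x)=P(x)\{H_n(x-y)-H_n(x)\}$. Combining this with the quotient rule applied to the rational prefactor $\frac{1+x-y+ny}{1+x-y}$, I expect to verify by a direct computation that
\[
\mathcal{D}_x\bigg[\frac{(-1)^n(1+x-y+ny)}{n(n-1)(1+x-y)}\frac{\binm{x-y+n}{n}}{\binm{x+n}{n}\binm{y}{n}}\bigg]=-\big\{\text{right-hand side of Theorem~\ref{thm-g}}\big\}.
\]
This single differentiation is the one genuinely delicate step: the piece coming from $\mathcal{D}_xP$ must reproduce the $\{H_n(x)-H_n(x-y)\}$ factor on the right of Theorem~\ref{thm-g}, while the piece coming from differentiating $\frac{1+x-y+ny}{1+x-y}$ must reproduce the purely rational summand $\frac{ny}{(1+x-y)(1+x-y+ny)}$, both with exactly the right coefficients.

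Granting this antiderivative, $\mathcal{I}_x$ of the right-hand side equals the bracketed expression evaluated at $0$ minus its value at $x$ (the overall sign being supplied by the minus sign above), in complete analogy with \eqref{harmonic-b}. I would then let $x\to\infty$: since $H_k(x)\to0$ and $P(x)\to1$, the $H_k(x)$ terms drop out and the limit evaluates the constant $\sum_{k=0}^n(-1)^k\binm{n}{k}\frac{\binm{y+k}{k}}{\binm{y-n+k}{k}}\frac{(y-1)y}{(y+k-1)(y+k)}H_k$, the limiting value of the bracketed antiderivative being $\frac{(-1)^n}{n(n-1)}\frac{1}{\binm{y}{n}}$. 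Subtracting the limiting identity from the finite-$x$ identity cancels the value at $0$ on the right and the constant sum on the left, isolates $\sum(\cdots)H_k(x)$, and yields Theorem~\ref{thm-h}. Apart from the differentiation check in the middle step, this is a line-by-line transcription of the proof of Theorem~\ref{thm-d}.
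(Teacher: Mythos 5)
Your proposal is correct and follows exactly the paper's own route: apply $\mathcal{I}_x$ to Theorem~\ref{thm-g} (whose right-hand side is indeed the negative derivative of the stated bracket, as your product/quotient-rule computation confirms), let $x\to\infty$ to evaluate the constant $H_k$-sum, and subtract. The only difference is that you make explicit the antiderivative verification that the paper leaves implicit.
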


\begin{proof}
Applying the integral operator $\mathcal{I}_x$ to both sides of
Theorem \ref{thm-g}, we attain
  \bmn\label{harmonic-c}
&&\xxqdn\sum_{k=0}^n(-1)^k\binm{n}{k}\frac{\binm{y+k}{k}}{\binm{y-n+k}{k}}
\frac{(y-1)y}{(y+k-1)(y+k)}\Big\{H_k-H_{k}(x)\Big\}
 \nnm\\\nnm
&&\xxqdn\:\,=\frac{(-1)^{n+1}(1+x-y+ny)}{n(n-1)(1+x-y)}\frac{\binm{x-y+n}{n}}{\binm{x+n}{n}\binm{y}{n}}\bigg|_0^x\\
&&\xxqdn\:\,=\frac{(-1)^n(1-y+ny)}{n(n-1)(1-y)}\frac{\binm{-y+n}{n}}{\binm{y}{n}}
-\frac{(-1)^n(1+x-y+ny)}{n(n-1)(1+x-y)}\frac{\binm{x-y+n}{n}}{\binm{x+n}{n}\binm{y}{n}}.
 \emn
Take the limit $x\to\infty$ on both sides of \eqref{harmonic-c} to
derive
 \bnm
&&\qdn\qqdn\xxqdn\sum_{k=0}^n(-1)^k\binm{n}{k}\frac{\binm{y+k}{k}}{\binm{y-n+k}{k}}
\frac{(y-1)y}{(y+k-1)(y+k)}H_k\\
&&\qdn\qqdn\xxqdn\,\,=\:\frac{(-1)^n(1-y+ny)}{n(n-1)(1-y)}\frac{\binm{-y+n}{n}}{\binm{y}{n}}-
\frac{(-1)^n}{n(n-1)}\frac{1}{\binm{y}{n}}.
 \enm
The difference of \eqref{harmonic-c} and the last equation produces
Theorem \ref{thm-h}.
\end{proof}

Setting $x=p$, $y=q$ in  Theorem \ref{thm-h} with
$p,q\in\mathbb{N}_0$ and utilizing \eqref{saalschutz-e}, we obtain
the summation formula involving harmonic numbers.

\begin{corl} \label{corl-h}
Let $p$ and $q$ be both nonnegative integers provided that $q\geq
n$. Then
  \bnm
&&\qdn\xqdn\sum_{k=0}^n(-1)^k\binm{n}{k}\frac{\binm{q+k}{k}}{\binm{q-n+k}{k}}
\frac{(q-1)q}{(q+k-1)(q+k)}H_{p+k}\\
&&\qdn\xqdn\:\,=\frac{(-1)^n(1+p-q+nq)}{n(n-1)(1+p-q)}\frac{\binm{p-q+n}{n}}{\binm{p+n}{n}\binm{q}{n}}
 -\frac{(-1)^n}{n(n-1)}\frac{1}{\binm{q}{n}}.
 \enm
\end{corl}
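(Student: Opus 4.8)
The plan is to obtain Corollary \ref{corl-h} as a direct specialization of Theorem \ref{thm-h} at $x=p$, $y=q$ with $p,q\in\mathbb{N}_0$ and $q\geq n$, the only genuine work being the conversion from the ``function'' harmonic numbers $H_k(x)$ to the classical $H_{p+k}$. The key elementary fact is that, for a nonnegative integer $p$,
\[
H_k(p)=\sum_{i=1}^k\frac{1}{p+i}=\sum_{j=p+1}^{p+k}\frac{1}{j}=H_{p+k}-H_p .
\]
Substituting $x=p$, $y=q$ in Theorem \ref{thm-h} and replacing each $H_k(p)$ by $H_{p+k}-H_p$, the left-hand side separates as $S_1-H_pS_0$, where
\[
S_1=\sum_{k=0}^n(-1)^k\binm{n}{k}\frac{\binm{q+k}{k}}{\binm{q-n+k}{k}}\frac{(q-1)q}{(q+k-1)(q+k)}H_{p+k}
\]
is exactly the left-hand side of the Corollary, and
\[
S_0=\sum_{k=0}^n(-1)^k\binm{n}{k}\frac{\binm{q+k}{k}}{\binm{q-n+k}{k}}\frac{(q-1)q}{(q+k-1)(q+k)}
\]
is the companion sum that carries no harmonic number. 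Since the right-hand side of Theorem \ref{thm-h} contains no harmonic numbers at all, its value at $x=p$, $y=q$ is already the right-hand side claimed in the Corollary. Hence everything reduces to proving $S_0=0$, for then $S_1=S_1-H_pS_0$ coincides with the desired closed form.

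To evaluate $S_0$ I would invoke \eqref{saalschutz-e} with $z=q-n$ (and $x=p$, $y=q$), which is the very substitution already used in the proof of Theorem \ref{thm-g}. With $z=q-n$ one has $\binm{x+y-z-n+k}{k}=\binm{p+k}{k}$, cancelling the factor $\binm{x+k}{k}=\binm{p+k}{k}$ in the summand, so the left-hand side of \eqref{saalschutz-e} collapses precisely to $S_0$; this is legitimate since $q\geq n$ forces $\binm{q-n+k}{k}\neq0$ and $q+k-1\geq1$, so every term is well defined. On the right-hand side of \eqref{saalschutz-e} the numerator factor $\binm{z-y+n}{n}$ becomes $\binm{0}{n}=0$ for $n\geq1$, while the accompanying prefactor $\frac{(z-x-1)(z-y+1)+n(z-x-y)}{(z-x-1+n)(z-y+1)}$ and the remaining binomials stay finite (here $\binm{z-x-y-1+n}{n}=\binm{-p-1}{n}\neq0$ and $\binm{z+n}{n}=\binm{q}{n}\neq0$), as long as $q\neq p+1$ and $n\geq2$. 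Consequently the right-hand side vanishes and $S_0=0$.

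The one delicate point, and the main obstacle, is the degenerate case $q=p+1$: then the denominator factor $z-x-1+n=q-p-1$ vanishes at the same time as $\binm{z-x-1+n}{n}=\binm{0}{n}$, producing a removable $0/0$ in the first binomial/prefactor group (which limits to the finite value $(-1)^{n-1}/n$). Crucially, the \emph{second} zero, coming from $\binm{z-y+n}{n}=\binm{0}{n}$, is not matched by any pole, so the right-hand side of \eqref{saalschutz-e} is still $0$ and $S_0=0$ persists; of course at $q=p+1$ the Corollary's own right-hand side, carrying the factors $\binm{p-q+n}{n}$ and $1/(1+p-q)$, must be read in the same limiting sense. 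Combining $S_0=0$ with the specialized Theorem \ref{thm-h} then gives $S_1$ equal to the stated expression, completing the proof of Corollary \ref{corl-h}.
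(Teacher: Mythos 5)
Your proof is correct and follows exactly the route the paper intends: its one-sentence justification (setting $x=p$, $y=q$ in Theorem \ref{thm-h} and ``utilizing \eqref{saalschutz-e}'') is precisely your argument of writing $H_{p+k}=H_k(p)+H_p$ and evaluating \eqref{saalschutz-e} at $z=q-n$, where the factor $\binm{z-y+n}{n}=\binm{0}{n}=0$ forces the companion sum $S_0$ to vanish. Your extra care about the removable singularity at $q=p+1$ and the implicit requirement $n\ge 2$ goes beyond what the paper records but is consistent with it.
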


Applying the derivative operator $\mathcal{D}_x$ to both sides of
Theorem \ref{thm-g}, we get the summation formula involving
generalized harmonic numbers of 3-order.

\begin{thm} \label{thm-i}
Let $x$ and $y$ be both complex numbers. Then
 \bnm
&&\xqdn\sum_{k=0}^n(-1)^k\binm{n}{k}\frac{\binm{y+k}{k}}{\binm{y-n+k}{k}}
\frac{(y-1)y}{(y+k-1)(y+k)}H_{k}^{\langle3\rangle}(x)\\
&&\xqdn\:\:=\,\frac{(-1)^n(1+x-y+ny)}{2n(n-1)(1+x-y)}\frac{\binm{x-y+n}{n}}{\binm{x+n}{n}\binm{y}{n}}
\{A_n(x,y)+B_n(x,y)\},
 \enm
where the two symbols on the right hand side stand for
  \bnm
 &&\xqdn\qdn
 A_n(x,y)=\big[H_{n}^{\langle2\rangle}(x)-H_{n}^{\langle2\rangle}(x-y)\big]+\frac{2ny}{(1+x-y)^2(1+x-y+ny)},\\
&&\xqdn\qdn
B_n(x,y)=\big[H_{n}(x)-H_{n}(x-y)\big]\bigg[H_{n}(x)-H_{n}(x-y)+\frac{2ny}{(1+x-y)(1+x-y+ny)}\bigg].
\enm
\end{thm}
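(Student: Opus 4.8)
The plan is to produce Theorem~\ref{thm-i} by applying the derivative operator $\mathcal{D}_x$ to both sides of Theorem~\ref{thm-g}, as the remark preceding the statement indicates. On the left-hand side the variable $x$ enters only through $H_k^{\langle2\rangle}(x)$, and the relation $\mathcal{D}_xH_k^{\langle2\rangle}(x)=-2H_k^{\langle3\rangle}(x)$ from the introduction turns the left side of Theorem~\ref{thm-g} into precisely $-2$ times the left side of Theorem~\ref{thm-i}. Hence it remains to differentiate the right side of Theorem~\ref{thm-g} and to match the outcome, after dividing by $-2$, against the claimed right side; because the prefactor of Theorem~\ref{thm-i} carries an extra factor $\tfrac12$ relative to that of Theorem~\ref{thm-g}, the whole statement reduces to a single identity between the two bracketed expressions.

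To organize the differentiation of the right side, I would write it as $K\,P(x)\,S(x)$ with the $x$-free constant $K=\dfrac{(-1)^n}{n(n-1)\binom{y}{n}}$ and
\[
P(x)=\frac{1+x-y+ny}{1+x-y}\,\frac{\binom{x-y+n}{n}}{\binom{x+n}{n}},\qquad
S(x)=H_n(x)-H_n(x-y)+\frac{ny}{(1+x-y)(1+x-y+ny)},
\]
and then use the product rule in the form $\mathcal{D}_x(PS)=P\big(\tfrac{P'}{P}S+S'\big)$. Writing $u=1+x-y$, $v=\tfrac{ny}{u(u+ny)}$, $h_1=H_n(x)-H_n(x-y)$ and $h_2=H_n^{\langle2\rangle}(x)-H_n^{\langle2\rangle}(x-y)$, the logarithmic derivative $P'/P$ splits into the rational part $\tfrac{1}{u+ny}-\tfrac1u=-v$ and, via $\mathcal{D}_x\binom{x+r}{s}=\binom{x+r}{s}\{H_r(x)-H_{r-s}(x)\}$ applied to both binomials, the harmonic part $H_n(x-y)-H_n(x)=-h_1$; thus $-P'/P=h_1+v$, which happens to equal $S$. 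Differentiating $S$ by means of $\mathcal{D}_xH_n(x)=-H_n^{\langle2\rangle}(x)$ and direct differentiation of its rational tail gives $-S'=h_2+ny\,\tfrac{2u+ny}{u^2(u+ny)^2}$.

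Collecting these, the identity to be checked is $A_n(x,y)+B_n(x,y)=-\big(\tfrac{P'}{P}S+S'\big)=(h_1+v)^2+h_2+ny\,\tfrac{2u+ny}{u^2(u+ny)^2}$. Expanding $(h_1+v)^2=h_1^2+2h_1v+v^2$ reproduces $B_n=h_1\big(h_1+\tfrac{2ny}{u(u+ny)}\big)$ together with the leftover $v^2$, while $h_2$ furnishes the harmonic part of $A_n$. The single genuine computation is the purely rational identity
\[
v^2+ny\,\frac{2u+ny}{u^2(u+ny)^2}=\frac{2ny}{u^2(u+ny)},
\]
which delivers the remaining term $\tfrac{2ny}{(1+x-y)^2(1+x-y+ny)}$ of $A_n$. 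I expect this last step to be the main piece of bookkeeping, since it requires clearing the common denominator $u^2(u+ny)^2$ and verifying that the numerators coincide; once it is in hand, dividing the differentiated identity by $-2$ yields Theorem~\ref{thm-i}.
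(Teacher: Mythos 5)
Your proposal is correct and follows exactly the route the paper indicates: the paper gives no written-out proof of Theorem~\ref{thm-i} beyond the remark that it results from applying $\mathcal{D}_x$ to Theorem~\ref{thm-g}, and your computation (the identification $-P'/P=S$, the differentiation of $S$, and the rational identity $v^2+ny\,\tfrac{2u+ny}{u^2(u+ny)^2}=\tfrac{2ny}{u^2(u+ny)}$) carries that out correctly, including the factor $-2$ from $\mathcal{D}_xH_k^{\langle2\rangle}(x)=-2H_k^{\langle3\rangle}(x)$.
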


Choosing $x=p$, $y=q$ in  Theorem \ref{thm-i} with
$p,q\in\mathbb{N}_0$ and exploiting \eqref{saalschutz-e}, we gain
the summation formula involving harmonic numbers of 3-order.

\begin{corl} \label{corl-i}
Let $p$ and $q$ be both nonnegative integers satisfying $p\geq q\geq
n$. Then
 \bnm
&&\xqdn\qdn\sum_{k=0}^n(-1)^k\binm{n}{k}\frac{\binm{q+k}{k}}{\binm{q-n+k}{k}}
\frac{(q-1)q}{(q+k-1)(q+k)}H_{p+k}^{\langle3\rangle}\\
&&\xqdn\qdn\:\,=\frac{(-1)^n(1+p-q+nq)}{2n(n-1)(1+p-q)}\frac{\binm{p-q+n}{n}}{\binm{p+n}{n}\binm{q}{n}}
\{C_n(x,y)+D_n(x,y)\},
 \enm
where the corresponding expressions are
 \bnm
 &&\xxqdn
 C_n(p,q)=\big[H_{p+n}^{\langle2\rangle}-H_{p-q+n}^{\langle2\rangle}
 -H_{p}^{\langle2\rangle}+H_{p-q}^{\langle2\rangle}\big]+\frac{2nq}{(1+p-q)^2(1+p-q+nq)},\\
&&\xxqdn
D_n(p,q)=\big[H_{p+n}-H_{p-q+n}-H_{p}+H_{p-q}\big]\\
 &&\quad\times\:\bigg[H_{p+n}-H_{p-q+n}-H_{p}+H_{p-q}+\frac{2nq}{(1+p-q)(1+p-q+nq)}\bigg].
 \enm
\end{corl}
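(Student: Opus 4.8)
The plan is to obtain Corollary \ref{corl-i} directly as the integer specialization $x=p$, $y=q$ of Theorem \ref{thm-i}, the only substantive work being two pieces of bookkeeping: replacing the shifted harmonic numbers $H_k^{\langle 3\rangle}(p)$ on the left by the ordinary $H_{p+k}^{\langle 3\rangle}$, and recognizing that the expressions $A_n(p,q)$, $B_n(p,q)$ coincide with $C_n(p,q)$, $D_n(p,q)$. Throughout I take $n\geq2$ (forced by the factor $\tfrac{1}{n(n-1)}$) and use the hypotheses $p\geq q\geq n$.

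First I would treat the left-hand side. Since $p$ is a nonnegative integer, the definition of the generalized harmonic numbers gives the telescoping identity $H_k^{\langle 3\rangle}(p)=\sum_{i=1}^k (p+i)^{-3}=H_{p+k}^{\langle 3\rangle}-H_p^{\langle 3\rangle}$ (valid also at $k=0$, where both sides are $0$). Substituting this into the left side of Theorem \ref{thm-i} at $x=p$, $y=q$ splits the sum into the left side of the corollary minus $H_p^{\langle 3\rangle}$ times the harmonic-free sum
\[
S:=\sum_{k=0}^n(-1)^k\binm{n}{k}\frac{\binm{q+k}{k}}{\binm{q-n+k}{k}}\frac{(q-1)q}{(q+k-1)(q+k)}.
\]
The crucial point is that $S=0$. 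This is exactly \eqref{saalschutz-e} evaluated at $z=y-n$ with $x=p$, $y=q$: on the left the factor $\binm{x+y-z-n+k}{k}$ reduces to $\binm{x+k}{k}$ and cancels, leaving $S$, while on the right the factor $\binm{z-y+n}{n}$ becomes $\binm{0}{n}=0$ and the prefactor carries no compensating pole, since $(z-x-1+n)(z-y+1)=(q-p-1)(1-n)\neq0$ for $n\geq2$ and $p\geq q$. Hence the extra term $H_p^{\langle 3\rangle}\,S$ vanishes and the left side of Theorem \ref{thm-i} becomes precisely the left side of the corollary.

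Next I would convert the right-hand side. For a nonnegative integer argument $m$ (here $m=p$ and $m=p-q$, both $\geq0$ by $p\geq q$) the same telescoping gives $H_n^{\langle\ell\rangle}(m)=H_{m+n}^{\langle\ell\rangle}-H_m^{\langle\ell\rangle}$, so that
\[
H_n^{\langle\ell\rangle}(p)-H_n^{\langle\ell\rangle}(p-q)=H_{p+n}^{\langle\ell\rangle}-H_{p-q+n}^{\langle\ell\rangle}-H_p^{\langle\ell\rangle}+H_{p-q}^{\langle\ell\rangle}
\]
for each order $\ell$. Applying this with $\ell=2$ turns the bracket of $A_n(p,q)$ into that of $C_n(p,q)$, and with $\ell=1$ turns each bracketed factor of $B_n(p,q)$ into that of $D_n(p,q)$; the rational tails $\tfrac{2nq}{(1+p-q)^2(1+p-q+nq)}$ and $\tfrac{2nq}{(1+p-q)(1+p-q+nq)}$ are already identical in the two pairs. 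Thus $A_n(p,q)=C_n(p,q)$ and $B_n(p,q)=D_n(p,q)$, while the prefactor $\tfrac{(-1)^n(1+x-y+ny)}{2n(n-1)(1+x-y)}\tfrac{\binm{x-y+n}{n}}{\binm{x+n}{n}\binm{y}{n}}$ of Theorem \ref{thm-i} matches that of the corollary verbatim under $x=p$, $y=q$.

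The only genuinely non-formal step is the vanishing $S=0$; everything else is telescoping bookkeeping. I would therefore spend the care on verifying that $z=y-n$ is a legitimate (non-singular) evaluation of \eqref{saalschutz-e}, checking that the finitely many binomial denominators $\binm{q-n+k}{k}$ (nonzero since $q-n\geq0$), $\binm{q}{n}$ (nonzero since $q\geq n$), and the prefactor denominator $(q-p-1)(1-n)$ are all nonzero under $p\geq q\geq n$ and $n\geq2$, so that the substitution yields the honest value $0$ rather than an indeterminate form. With that in hand the corollary is immediate.
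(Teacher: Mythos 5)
Your proposal is correct and follows the paper's own (one-line) derivation: specialize $x=p$, $y=q$ in Theorem \ref{thm-i} and invoke \eqref{saalschutz-e} at $z=y-n$ to absorb the shift $H_k^{\langle3\rangle}(p)=H_{p+k}^{\langle3\rangle}-H_p^{\langle3\rangle}$, the key point being that the harmonic-free sum vanishes because $\binom{z-y+n}{n}=\binom{0}{n}=0$ while no compensating singularity arises under $p\geq q\geq n$ and $n\geq2$. You have simply supplied the bookkeeping the paper leaves implicit, so the two arguments coincide.
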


Applying the derivative operator $\mathcal{D}_x$ to both sides of
Theorem \ref{thm-i}, we achieve the summation formula involving
generalized harmonic numbers of 4-order.

\begin{thm} \label{thm-j}
Let $x$ and $y$ be both complex numbers. Then
 \bnm
&&\:\:\sum_{k=0}^n(-1)^k\binm{n}{k}\frac{\binm{y+k}{k}}{\binm{y-n+k}{k}}
\frac{(y-1)y}{(y+k-1)(y+k)}H_{k}^{\langle4\rangle}(x)\\
&&\:\:\:\,=\frac{(-1)^n}{6n(n-1)(1+x-y)}\frac{\binm{x-y+n}{n}}{\binm{x+n}{n}\binm{y}{n}}\\
 &&\:\:\:\,\times\:\bigg\{(1+x-y+ny)E_n(x,y)+\frac{3ny}{1+x-y}F_n(x,y)+G_n(x,y)\bigg\},
 \enm
where the three symbols on the right hand side stand for
  \bnm
 &&\xqdn
 E_n(x,y)=\big[H_{n}(x)-H_{n}(x-y)\big]^3+2\big[H_{n}^{\langle3\rangle}(x)-H_{n}^{\langle3\rangle}(x-y)\big]\\
 &&\qquad\:\:+\:\,
 3\big[H_{n}(x)-H_{n}(x-y)\big]\big[H_{n}^{\langle2\rangle}(x)-H_{n}^{\langle2\rangle}(x-y)\big],\\
&&\xqdn
F_n(x,y)=\big[H_{n}(x)-H_{n}(x-y)\big]^2+\big[H_{n}^{\langle2\rangle}(x)-H_{n}^{\langle2\rangle}(x-y)\big],\\
&&\xqdn
G_n(x,y)=\frac{6ny}{(1+x-y)^2}\big[H_{n}(x)-H_{n}(x-y)\big]+\frac{6ny}{(1+x-y)^3}.\\
\enm
\end{thm}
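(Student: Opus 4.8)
The plan is to differentiate the identity of Theorem \ref{thm-i} with respect to $x$, exactly as Theorem \ref{thm-j} is introduced in the text. On the left-hand side the only $x$-dependent factor in each summand is $H_k^{\langle3\rangle}(x)$, and by the relation $\mathcal D_xH_k^{\langle3\rangle}(x)=-3H_k^{\langle4\rangle}(x)$ recorded after Lemma \ref{lemm-a} the entire left-hand side becomes $-3$ times the sum appearing in Theorem \ref{thm-j}. Hence it suffices to show that $-\tfrac13\,\mathcal D_x$ applied to the right-hand side of Theorem \ref{thm-i} reproduces the asserted closed form. Writing that right-hand side as $P(x)\{A_n(x,y)+B_n(x,y)\}$ with
\[
P(x)=\frac{(-1)^n(1+x-y+ny)}{2n(n-1)(1+x-y)}\frac{\binm{x-y+n}{n}}{\binm{x+n}{n}\binm{y}{n}},
\]
I would expand by the product rule as $(\mathcal D_xP)\{A_n+B_n\}+P\,\mathcal D_x\{A_n+B_n\}$.

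The two ingredients needed are the logarithmic derivative of $P$ and the termwise derivatives of $A_n$ and $B_n$. For the former, Lemma \ref{lemm-a} applied to $(1+x-y+ny)/(1+x-y)$ together with $\mathcal D_x\binm{x-y+n}{n}=\binm{x-y+n}{n}H_n(x-y)$ and $\mathcal D_x\binm{x+n}{n}=\binm{x+n}{n}H_n(x)$ yields
\[
\frac{\mathcal D_xP}{P}=\frac{1}{1+x-y+ny}-\frac{1}{1+x-y}-\big[H_n(x)-H_n(x-y)\big].
\]
For the latter, using $\mathcal D_xH_n^{\langle2\rangle}(x)=-2H_n^{\langle3\rangle}(x)$ and $\mathcal D_xH_n(x)=-H_n^{\langle2\rangle}(x)$ (and the same with $x$ replaced by $x-y$), the harmonic part of $A_n$ differentiates into $-2\big[H_n^{\langle3\rangle}(x)-H_n^{\langle3\rangle}(x-y)\big]$, while $B_n=u^2+uv$ with $u=H_n(x)-H_n(x-y)$ and $v=\frac{2ny}{(1+x-y)(1+x-y+ny)}$ differentiates by the product rule; the rational tails of $A_n$ and $B_n$ are differentiated directly.

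After inserting these, the factor $1+x-y+ny$ carried by $P$ is pulled through $\mathcal D_xP/P$, where it cancels the pole $1/(1+x-y+ny)$ and converts $-(1+x-y+ny)/(1+x-y)$ into $-1-ny/(1+x-y)$. The resulting expression then splits into three natural groups. The terms retaining the overall factor $1+x-y+ny$ assemble into the cubic combination $E_n$, namely $u^3+2\big[H_n^{\langle3\rangle}(x)-H_n^{\langle3\rangle}(x-y)\big]+3u\big[H_n^{\langle2\rangle}(x)-H_n^{\langle2\rangle}(x-y)\big]$. The terms proportional to $v(u^2+w)$, where $w=H_n^{\langle2\rangle}(x)-H_n^{\langle2\rangle}(x-y)$, combine with the $\frac{ny}{1+x-y}\{A_n+B_n\}$ contribution via the relation $(1+x-y+ny)v=\frac{2ny}{1+x-y}$ to produce exactly $\frac{3ny}{1+x-y}F_n$ with $F_n=u^2+w$. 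The leftover purely rational and $u$-linear pieces form $G_n$.

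The main obstacle is this final regrouping, specifically checking that the residual rational terms arising from $(1+x-y+ny)\big[uR_A-u\,\mathcal D_xv-\mathcal D_xR_A\big]$, with $R_A=\frac{2ny}{(1+x-y)^2(1+x-y+ny)}$, together with $\frac{ny}{1+x-y}(uv+R_A)$, collapse correctly: the $u$-linear parts must sum to $\frac{6ny}{(1+x-y)^2}u$ and the constant parts to $\frac{6ny}{(1+x-y)^3}$, which is precisely $G_n$. The decisive cancellations are the algebraic identities $2ny\big[2(1+x-y)+ny\big]+2n^2y^2=4ny(1+x-y+ny)$ and $2ny\big[3(1+x-y)+2ny\big]+2n^2y^2=6ny(1+x-y+ny)$, after which the spurious factors of $1+x-y+ny$ clear from the denominators. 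Finally, multiplying by $-\tfrac13$ turns the prefactor constant $\tfrac{1}{2n(n-1)}$ into $\tfrac{1}{6n(n-1)}$, matching Theorem \ref{thm-j} and completing the proof.
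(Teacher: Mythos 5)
Your proposal is correct and follows exactly the route the paper indicates for Theorem \ref{thm-j}: apply $\mathcal{D}_x$ to Theorem \ref{thm-i}, use $\mathcal{D}_xH_{k}^{\langle3\rangle}(x)=-3H_{k}^{\langle4\rangle}(x)$ on the left, and regroup the differentiated right-hand side into $E_n$, $F_n$, $G_n$ (the paper states this in one sentence and omits the computation, which your verification of the logarithmic derivative of the prefactor and of the rational-term cancellations correctly supplies).
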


Selecting $x=p$, $y=q$ in  Theorem \ref{thm-j} with
$p,q\in\mathbb{N}_0$ and availing \eqref{saalschutz-e}, we attain
the summation formula involving harmonic numbers of 4-order.

\begin{corl} \label{corl-j}
Let $p$ and $q$ be both nonnegative integers provided that $p\geq
q\geq n$. Then
 \bnm
&&\qdn\xqdn\sum_{k=0}^n(-1)^k\binm{n}{k}\frac{\binm{q+k}{k}}{\binm{q-n+k}{k}}
\frac{(q-1)q}{(q+k-1)(q+k)}H_{p+k}^{\langle4\rangle}\\
&&\qdn\xqdn\:\,=\frac{(-1)^n}{6n(n-1)(1+p-q)}\frac{\binm{p-q+n}{n}}{\binm{p+n}{n}\binm{q}{n}}\\
 &&\qdn\xqdn\:\,\times\:\bigg\{(1+p-q+nq)U_n(p,q)+\frac{3nq}{1+p-q}V_n(p,q)+W_n(p,q)\bigg\},
 \enm
where the corresponding expressions are
 \bnm
 &&\xxqdn
 U_n(p,q)=\big[H_{p+n}-H_{p-q+n}-H_{p}+H_{p-q}\big]^3
 +2\big[H_{p+n}^{\langle3\rangle}-H_{p-q+n}^{\langle3\rangle}-H_{p}^{\langle3\rangle}+H_{p-q}^{\langle3\rangle}\big]
 \\&&\quad\!+\:\,3\big[H_{p+n}-H_{p-q+n}-H_{p}+H_{p-q}\big]
 \big[H_{p+n}^{\langle2\rangle}-H_{p-q+n}^{\langle2\rangle}-H_{p}^{\langle2\rangle}+H_{p-q}^{\langle2\rangle}\big],\\
&&\xxqdn
V_n(p,q)=\big[H_{p+n}-H_{p-q+n}-H_{p}+H_{p-q}\big]^2
+\big[H_{p+n}^{\langle2\rangle}-H_{p-q+n}^{\langle2\rangle}-H_{p}^{\langle2\rangle}+H_{p-q}^{\langle2\rangle}\big],\\
 &&\xxqdn
W_n(p,q)=\frac{6nq}{(1+p-q)^2}\big[H_{p+n}-H_{p-q+n}-H_{p}+H_{p-q}\big]+\frac{6nq}{(1+p-q)^3}.
 \enm
\end{corl}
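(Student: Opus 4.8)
The plan is to obtain Corollary \ref{corl-j} as the integer specialization $x\to p$, $y\to q$ (with $p\geq q\geq n$) of Theorem \ref{thm-j}, disposing of the resulting shift in the harmonic numbers by means of \eqref{saalschutz-e}. On the right-hand side the specialization is purely mechanical: the prefactor $\frac{(-1)^n}{6n(n-1)(1+x-y)}\frac{\binm{x-y+n}{n}}{\binm{x+n}{n}\binm{y}{n}}$ passes to the prefactor of the corollary, the scalars $1+x-y+ny$ and $\frac{3ny}{1+x-y}$ become $1+p-q+nq$ and $\frac{3nq}{1+p-q}$, and each difference $H_{n}^{\langle\ell\rangle}(x)-H_{n}^{\langle\ell\rangle}(x-y)$ turns into the four-term combination $H_{p+n}^{\langle\ell\rangle}-H_{p-q+n}^{\langle\ell\rangle}-H_{p}^{\langle\ell\rangle}+H_{p-q}^{\langle\ell\rangle}$ via $H_{n}^{\langle\ell\rangle}(p)=H_{p+n}^{\langle\ell\rangle}-H_{p}^{\langle\ell\rangle}$ and $H_{n}^{\langle\ell\rangle}(p-q)=H_{p-q+n}^{\langle\ell\rangle}-H_{p-q}^{\langle\ell\rangle}$. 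Thus $E_n,F_n,G_n$ specialize precisely to $U_n(p,q),V_n(p,q),W_n(p,q)$.

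The one substantive point is the left-hand side: Theorem \ref{thm-j} weights the summand by $H_{k}^{\langle4\rangle}(x)$, whereas the corollary weights it by $H_{p+k}^{\langle4\rangle}$. I would record $H_{k}^{\langle4\rangle}(p)=\sum_{i=1}^{k}(p+i)^{-4}=H_{p+k}^{\langle4\rangle}-H_{p}^{\langle4\rangle}$, so that the specialized left-hand side of Theorem \ref{thm-j} equals the desired sum of the corollary minus $H_{p}^{\langle4\rangle}$ times the bare weighted sum
\[
S=\sum_{k=0}^n(-1)^k\binm{n}{k}\frac{\binm{q+k}{k}}{\binm{q-n+k}{k}}\frac{(q-1)q}{(q+k-1)(q+k)}.
\]
It therefore suffices to prove $S=0$, whereupon the corollary coincides with the specialized theorem.

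To evaluate $S$ I would set $z=q-n$ in \eqref{saalschutz-e}. Then $\binm{x+y-z-n+k}{k}$ collapses to $\binm{p+k}{k}$ and cancels the numerator factor $\binm{x+k}{k}$, so the left-hand side of \eqref{saalschutz-e} reduces exactly to $S$; meanwhile its right-hand side carries the factor $\binm{z-y+n}{n}=\binm{0}{n}$, which vanishes for every $n\geq1$. The main obstacle is to confirm that this is an honest zero rather than an indeterminate form: under the hypotheses $p\geq q\geq n$ and $n\geq2$ (the latter forced by the $n(n-1)$ in the statement) the competing denominators $(z-y+1)=1-n$, $(z-x-1+n)=q-p-1$, $\binm{z+n}{n}=\binm{q}{n}$ and $\binm{z-x-y-1+n}{n}=\binm{-p-1}{n}$ all stay finite and nonzero, so no $0/0$ cancellation occurs and $S=0$ genuinely. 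Consequently the stray term $H_{p}^{\langle4\rangle}S$ disappears, and combining this with the mechanical right-hand side specialization above yields the corollary.
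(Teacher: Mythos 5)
Your proposal is correct and follows exactly the route the paper intends: the paper's one-line justification ("Selecting $x=p$, $y=q$ in Theorem \ref{thm-j} ... and availing \eqref{saalschutz-e}") is precisely your specialization plus the observation that the stray term $H_{p}^{\langle4\rangle}S$ vanishes because \eqref{saalschutz-e} at $z=q-n$ reduces its left side to $S$ while its right side carries the factor $\binm{z-y+n}{n}=\binm{0}{n}=0$ with all denominators finite and nonzero under $p\geq q\geq n\geq2$. You have merely made explicit the details the paper leaves to the reader.
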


 Closed expressions for the following series
 \bnm
 &&\sum_{k=0}^{n}(-1)^k\binm{n}{k}\frac{\binm{y+k}{k}}{\binm{y-n+k}{k}}
 \frac{\binm{y}{t}}{\binm{y+k}{t}}H_{k}^{\langle\ell\rangle}(x)
 \enm
with $t\geq2$ and $\ell\geq5$ can also be given in the same way. The
corresponding conclusions will not be laid out in the paper.

 \textbf{Acknowledgments}

 The work is supported by the National Natural Science Foundation of China (No. 11301120).



\end{document}